\numberwithin{equation}{subsection}
\newtheorem{theorem}[equation]{Theorem}
\newtheorem{lemma}[equation]{Lemma}
\newtheorem{corollary}[equation]{Corollary}
\theoremstyle{definition}
\newtheorem{example}[equation]{Example}
\theoremstyle{remark}
\newtheorem{remark}[equation]{Remark}
\newtheorem{case}[equation]{Case}
\newtheorem{subcase}[equation]{Subcase}
\theoremstyle{observation}
\newcommand{\hide}[1]{}
\newcommand{\bbC}{{\mathbb C}}
\newcommand{\bbH}{{\mathbb H}}
\newcommand{\bbQ}{{\mathbb Q}}
\newcommand{\bbZ}{{\mathbb Z}}
\newcommand{\rH}{{\rm H}}
\newcommand{\rR}{{\rm R}}
\begin{document}
\title[KV Formulation of Log Akizuki-Nakano Vanishing]{A Kawamata-Viehweg type formulation of the Logarithmic Akizuki-Nakano Vanishing Theorem}
\author{Donu Arapura}
\address{Department of Mathematics, Purdue University,
150 N. University Street, West Lafayette, IN 47907, U.S.A.}
\email{arapura@math.purdue.edu}
\author{Kenji Matsuki}
\address{Department of Mathematics, Purdue University,
150 N. University Street, West Lafayette, IN 47907, U.S.A.}
\email{matsuki@purdue.edu}
\author{Deepam Patel}
\address{Department of Mathematics, Purdue University,
150 N. University Street, West Lafayette, IN 47907, U.S.A.}
\email{patel471@purdue.edu}
\author{Jaros{\l}aw W{\l}odarczyk}
\address{Department of Mathematics, Purdue University,
150 N. University Street, West Lafayette, IN 47907, U.S.A.}
\email{wlodarcz@purdue.edu}

\thanks{D.A. was partially supported by a grant from the Simons
  foundation. K.M. would like to thank RIMS at Kyoto for their warm hospitality and generous support. D.P. would like to acknowledge support from the National Science Foundation award DMS-1502296.  J.W. would like to thank the Binational (U.S. - Israel) Science Fund BSF 2014365 for their support. }

\begin{abstract}
In this largely expository article, we present a Kawamata-Viehweg type formulation of the (logarithmic) Akizuki-Nakano Vanishing Theorem. While the result is likely known to the experts, it does not seem to appear in the existing literature. 
\end{abstract}
\maketitle

\tableofcontents

\section{Introduction}

In the following, we work over the field ${\mathbb C}$ of complex numbers. However, by the Lefschetz principle or flat base change, all the vanishing results in this paper are valid over any field of characteristic zero.\\

Let $X$ be a smooth projective variety, $A$ an integral ample divisor on $X$, and $K_X$ the canonical divisor. In this setting, the classical Kodaira Vanishing Theorem (\cite{Kodaira}) states that
\begin{equation*}\label{KV}
\rH^i(X,{\mathcal O}_X(K_X + A)) = 0 \text{ for }i > 0.
\end{equation*}

According to the Iitaka philosophy (cf. \cite{Matsuki}), we obtain its logarithmic version (\cite{Norimatsu}) by adding a simple normal crossings divisor $D = \sum D_i$, called the bounday, on $X$:
\begin{equation}\label{KVlog}
\rH^i(X,{\mathcal O}_X(K_X + D+A)) = 0 \text{ for }i > 0.
\end{equation}

The celebrated Kawamata-Viehweg Vanishing Theorem (\cite{Kawamata, Viehweg}) further generalizes \eqref{KVlog} to the setting where $A$ is a $\bbQ$-divisor, by allowing the boundary divisor to have a fractional part $F = \sum f_jF_j \hskip.1in (0 < f_j < 1)$ such that $F + A$ is integral (i.e.,  $F = \lceil A\rceil - A$) as well as an integral part $B = \sum_kB_k$.  It states that
\begin{equation*}\label{KV}
\rH^i(X,{\mathcal O}_X(K_X + B + F + A)) = \rH^i(X,{\mathcal O}_X(K_X + B + \lceil A\rceil) = 0 \text{ for }i > 0,
\end{equation*}
where $B$ and $F$ share no common components, and $\text{Supp}(B \cup F)$ is a simple normal crossings divisor.\\

Given the preceding discussion, it is natural to ask for an analog of the previous picture in the setting of the Akizuki-Nakano Vanishing.
More precisely, recall that the Akizuki-Nakano Vanishing Theorem(\cite{Akizuki-Nakano}) states that
\begin{equation}\label{AKvanishing}
\rH^i(X,\Omega_X^j( A)) = 0 \text{ for }i + j > \dim X,
\end{equation}
where $A$ is an integral ample divisor on $X$.
As before, the Iitaka philosophy suggests that one would obtain a logarithmic version of \eqref{AKvanishing} by considering a simple normal crossings boundary divisor $D = \sum D_i$, and replacing the usual sheaf of differential forms with the sheaf of logarithmic differential forms. This leads precisely to the Esnault-Viehweg Vanishing Theorem (\cite{EVB, EVI}):
\begin{equation*}\label{EVvanishing}
\rH^i(X,\Omega_X^j(\log(D))( A)) = 0 \text{ for }i + j > \dim X.
\end{equation*}
At about  the same time, Steenbrink \cite{Steenbrink} proved that 
\begin{equation*}\label{Stvanishing}
\rH^i(X,\Omega_X^j(\log(D))( A-D)) = 0 \text{ for }i + j > \dim X.
\end{equation*}
Sometime later the first author \cite{Arapura2} found a ``fractional''  version, which will be explained in section \ref{section:pvanishing}.
The statement of this fractional version does not seem to yield directly, as its special cases, all the known classical vanishing theorems mentioned above. However, a slight modification of the  statement does, and this is the main result that we want to explain.
 Suppose that $F$ is a fractional divisor with support contained in $D$, and let $G$ be an integral divisor such that $F \leq G \leq D$.  
Then we will show in the text that
\begin{equation*}
\rH^i(X,\Omega_X^j(\log(D))(F + A - G)) = \rH^i(X,\Omega_X^j(\log(D))(\lceil A\rceil - G) = 0
\text{ for } i + j > \dim X.
\end{equation*}

%

This statement should not come as a surprise to the experts. For
example, it readily follows, via some standard arguments, from theorem
6.2  of the beautiful book by Esnault-Viehweg (\cite{EVB}).  We also recently
learned of a nice paper by C. Huang,  K.  Liu, X. Wan, and X. Yang
\cite{HLWY}, which proves a similar result by $L^2$-methods in the K\"ahler setting.
Other authors have also considered certain cases in the
presence of singularities (e.g. \cite{Kov2}).  
We do not consider such cases in this article.  We are not claiming much originality in the result. Our goal here is to present a Kawamata-Viehweg type formulation of the Kodaira-Akizuki-Nakano Vanishing Theorem in a way that is easily accessible to non-experts. \\

We present two proofs of the main result.
The first is elementary,  and involves a reduction to  Steenbrink's Vanishing Theorem by the  Kawamata Covering Lemma.  Furthermore, since the logarithmic differential forms have no ramification under the Kummer covers, this proof makes it clear that the process of taking the round up ``$\lceil {A}\rceil$'' does not stem from the ramification, but rather from the subtraction of some effective divisor ``$G$'' appearing in our formulation. We note that this fact is not readily visible in the classical proof of the Kawamata-Viehweg Vanishing Theorem (\cite{Kawamata,KMM,Matsuki}) when reducing the fractional case to the integral case via the covering technique. This is caused by the fact that the Kawamata-Viehweg Vanishing Theorem only deals with top degree forms, while the Akizuki-Nakano Vanishing Vanishing Theorem deals also with lower degree differential forms.  We emphasize again that all the essential ideas of reducing the fractional case to the integral case via covering already appear in \cite{EVB} as well as in \cite{Kawamata}. \\

The second proof uses a simplified version of an argument in \cite{Arapura2} to  establish  a fractional form of the Steenbrink Vanishing theorem.  It uses the method of Deligne-Illusie \cite{DI} in positive characteristic. It is also worth noting that instead of the Kawamata Covering Lemma, it uses a lemma of Hara \cite{Hara} to handle the fractional parts. Once the fractional version of the Steenbrink Vanishing is proved, our main result follows immediately as an easy corollary via some ``round up'' tricks in \ref{helmke}.\\


We now briefly outline the contents. In the next section, we state our Kawamata-Viehweg type formulation of the logarithmic Akizuki-Nakano Vanishing Theorem, and explain how to obtain the classical vanishing theorems discussed above as special cases. We also discuss the failure of some naive versions of such a formulation. In the third section, we present the first proof of the main result with some remarks and an alternate argument.  In the fourth section, we then present the second proof of the main result.  Finally, in the last section, we discuss some potential applications and further generalizations. 

\section{The main vanishing result.}

\subsection{Statement of the main vanishing result}
Let $X$ be a smooth projective variety, $D = \sum D_i$ a simple normal crossings divisor, $A$ an ample ${\mathbb Q}$-divisor, and
$F: = \lceil A\rceil - A$.

\begin{theorem}\label{mainvan} Let $X, D, A,$ and $F$ be as above. Suppose that $F = \lceil A\rceil - A \leq D$ (i.e. the support of $F$ is contained in $D$), and let $G$ be an integral divisor such that $F \leq G \leq D$.  
Then we have:
\begin{equation*}
\rH^i(X,\Omega_X^j(\log(D))(F + A - G)) = \rH^i(X,\Omega_X^j(\log(D))(\lceil A\rceil - G) = 0
\text{ for } i + j > \dim X.
\end{equation*}
\end{theorem}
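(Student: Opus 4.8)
The plan is to reduce Theorem~\ref{mainvan} to Steenbrink's Vanishing Theorem via the Kawamata Covering Lemma, exactly as indicated in the introduction. First, note that the second equality $\Omega_X^j(\log(D))(F+A-G) = \Omega_X^j(\log(D))(\lceil A\rceil - G)$ is immediate from $F + A = \lceil A\rceil$ by definition of $F$, so only the vanishing statement requires work. Write $F = \sum f_j F_j$ with $0 < f_j < 1$; since the $F_j$ are components of $D$, after clearing denominators we can choose a positive integer $N$ with $N f_j \in \bbZ$ for all $j$, and arrange (via the Covering Lemma) a finite Kummer-type cover $\pi\colon Y \to X$ from a smooth projective $Y$, ramified only along components of $D$ (together with auxiliary general members of a very ample linear system to achieve smoothness of $Y$), with ramification index divisible by $N$ along each $F_j$. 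Let $E = (\pi^*D)_{\mathrm{red}}$ be the reduced preimage of $D$; it is again simple normal crossings on $Y$.

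The key point — and the one the authors flag as conceptually important — is that logarithmic differential forms are unramified along the branch locus: for a Kummer cover one has a natural isomorphism $\pi^*\Omega_X^j(\log D) \cong \Omega_Y^j(\log E)$ (no correction by the ramification divisor appears, in contrast to the ordinary $\Omega^j$). The next step is therefore a divisor bookkeeping computation on $Y$. Pulling back the coefficient divisor $F + A - G$, I would show that
\[
\pi^*(F + A - G) = (\text{integral pullback of }A) + (\text{integral correction supported on }E) - E'
\]
where $E' \le E$ is the reduced preimage of $G$, and that $\pi^*A - (\text{round-up adjustments})$ is an ample integral divisor on $Y$. Concretely, because the ramification index along each $F_j$ is a multiple of $N$, the fractional coefficient $f_j$ pulls back to an integer; the round-down of $\pi^*(F+A)$ along $E$ differs from $\pi^*\lceil A\rceil$ by an effective integral divisor supported on $E$, and one absorbs this into the "$-G$" part to land exactly in the shape $\Omega_Y^j(\log E)(A_Y - E')$ with $A_Y$ ample integral and $0 \le E' \le E$ reduced. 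This is precisely the hypothesis configuration of Steenbrink's theorem (the case $G = D$, i.e. subtracting a reduced boundary divisor from an ample one), so
\[
\rH^i\bigl(Y, \Omega_Y^j(\log E)(A_Y - E')\bigr) = 0 \quad\text{for } i + j > \dim Y = \dim X.
\]

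Finally I would descend from $Y$ to $X$. Since $\pi$ is finite, $\rH^i(Y,-) = \rH^i(X, \pi_* -)$, and the group $\Gamma := \Gal(Y/X)$ (or the relevant abelian covering group) acts on everything; taking the $\Gamma$-invariant (more precisely, the appropriate isotypic) summand of the pushforward $\pi_*\bigl(\Omega_Y^j(\log E)(A_Y - E')\bigr)$ recovers $\Omega_X^j(\log D)(F + A - G)$ as a direct summand, because over $\bbC$ averaging over $\Gamma$ splits the trace map. Vanishing of cohomology on the larger sheaf then forces vanishing on the summand, giving the theorem. The main obstacle, and where I would spend the most care, is the divisor-theoretic verification in the middle step — checking that the fractional and integral parts pull back so that the coefficient along $E$ lands in the half-open range $[-1,0]$ needed to match Steenbrink's hypotheses, and simultaneously that the "ample part" remains genuinely ample after the round-up adjustments; the unramifiedness of log forms and the finite-pushforward/isotypic-splitting are comparatively standard once that is in place.
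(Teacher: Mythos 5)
Your overall strategy (Kawamata covering, descent via $\Gamma$-invariants of $\pi_*$, then Steenbrink on the cover) is the same as the paper's, and your descent step is essentially the paper's Lemma \ref{Ginv}. But there is a genuine gap at the point you yourself identify as the crux. After the covering you land in $\rH^i(Y,\Omega_Y^j(\log E)(A_Y - E'))$ with $A_Y$ ample integral and $E'$ the \emph{reduced preimage of $G$}, so $0 \le E' \le E$ with $E' = E$ only when $G = D$. You assert this is ``precisely the hypothesis configuration of Steenbrink's theorem,'' but Steenbrink's theorem requires subtracting the \emph{entire} reduced boundary: it gives the vanishing of $\rH^i(Y,\Omega_Y^j(\log E)(A_Y - E))$, not of $\rH^i(Y,\Omega_Y^j(\log E)(A_Y - E'))$ for a proper subdivisor $E' < E$. (Indeed, for $E' = 0$ the statement is Esnault--Viehweg, which is a different theorem, and the intermediate cases are exactly the integral case of the theorem being proved.) So your reduction is circular, or at best incomplete, whenever $G < D$.

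Two ways to close the gap, both used in the paper. The first is an induction in the integral case: peel off a component $D_1$ of $D$ not contained in $G$ and use the residue sequence
$0 \to \Omega_X^j(\log D')(A-G) \to \Omega_X^j(\log D)(A-G) \to \Omega_{D_1}^{j-1}(\log D'|_{D_1})((A-G)|_{D_1}) \to 0$,
inducting simultaneously on the number of components of $D$ and on $\dim X$, with $G = D$ (Steenbrink) as the base case. The second (the Helmke variant in \ref{helmke}) avoids the residue sequence: replace $A$ by $A' = A + \epsilon(D-G)$ for $0 < \epsilon \ll 1$, so that $A'$ is still ample, $\lceil A'\rceil - A' \le D$, and $\lceil A'\rceil - D = \lceil A\rceil - G$; then the general case follows from the case $G = D$ applied to $A'$. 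You need to add one of these steps. A secondary, smaller point: the isomorphism $\pi^*\Omega_X^j(\log D) \cong \Omega_Y^j(\log D_Y)$ that you invoke is literally false in the presence of the auxiliary branch divisor $M$ (the generators $dx_w$ along $M$ pull back with ramification factors $y_w^{m-1}$), so the ``divisor bookkeeping'' must be done for a local basis of $\Omega_Y^j(\log D_Y)$ inside $\Omega_Y^j(\log (D\cup M)_Y)$, as in the paper's proof of Lemma \ref{Ginv}; this works out but is not automatic from unramifiedness of log forms along $D$ alone.
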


The following relative version easily follows from the above absolute version (see, e.g., an argument in the proof of Theorem 1-2-3 \cite{KMM}).

\begin{corollary}\label{relmainvan} Let $f:X \rightarrow Y$ be a projective morphism from a nonsingular variety $X$ to a variety $Y$, $D = \sum D_i$ a simple normal crossings divisor on $X$, $A$ an $f$-ample ${\mathbb Q}$-divisor, and
$F: = \lceil A\rceil - A$.  Then we have:
\begin{equation*}
\mathrm{R}^if_*(\Omega_X^j(\log(D))(F + A - G)) = \mathrm{R}^if_*(\Omega_X^j(\log(D))(\lceil A\rceil - G)) = 0
\text{ for } i + j > \dim X.
\end{equation*}
\end{corollary}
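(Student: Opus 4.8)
The plan is to derive the relative statement from the absolute one (Theorem \ref{mainvan}) by the standard compactification argument for relative Kawamata--Viehweg-type vanishing. Since the sheaves $\mathrm{R}^if_*(-)$ may be tested on an affine open cover of $Y$, one may assume $Y$ affine; fix an open immersion $Y \hookrightarrow \bar Y$ into a projective variety. By Nagata compactification followed by Hironaka resolution, choose a smooth projective $\bar X$ with a projective morphism $\bar f \colon \bar X \to \bar Y$ such that $\bar f^{-1}(Y) = X$, $\bar f|_X = f$, the resolution is an isomorphism over $X$, and $\bar D := \overline{D} + (\bar X\setminus X)_{\mathrm{red}}$ is a simple normal crossings divisor restricting to $D$ over $X$. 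The only nontrivial preparatory step is to extend $A$: arguing as in the proof of \cite[Theorem~1-2-3]{KMM}, and using both the $f$-ampleness of $A$ and the freedom---granted by the projection formula---to replace $A$ by $A + f^*L$ for a line bundle $L$ on $Y$, one obtains an \emph{ample} $\bbQ$-Cartier divisor $\bar A$ on $\bar X$ with $\bar A|_X = A$ and fractional part $\bar F := \lceil\bar A\rceil - \bar A \leq \bar D$ (concretely, starting from a closure of $A$, adding a large multiple of $\bar f^*(\text{ample on }\bar Y)$, and applying Kleiman's criterion). Finally pick an integral divisor $\bar G$ with $\bar F \leq \bar G \leq \bar D$ and $\bar G|_X = G$.

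Set $\bar{\cF} := \Omega_{\bar X}^j(\log\bar D)(\bar F + \bar A - \bar G)$ and fix an ample divisor $\bar H$ on $\bar Y$. For every $m \geq 0$ the divisor $\bar A + m\bar f^*\bar H$ is still ample, has the same round-up difference $\bar F \leq \bar D$, and is paired with the same $\bar G$, so Theorem \ref{mainvan} applied to $(\bar X,\bar D,\bar A + m\bar f^*\bar H,\bar G)$ yields
\[
\rH^i\bigl(\bar X,\ \bar{\cF}\otimes\bar f^*\cO_{\bar Y}(m\bar H)\bigr) = 0\qquad(i+j>\dim X).
\]
For $m \gg 0$, the projection formula, relative Serre vanishing and the Leray spectral sequence for $\bar f$ give an isomorphism $\rH^i(\bar X,\ \bar{\cF}\otimes\bar f^*\cO_{\bar Y}(m\bar H)) \cong \rH^0(\bar Y,\ \mathrm{R}^i\bar f_*\bar{\cF}\otimes\cO_{\bar Y}(m\bar H))$, with the latter sheaf globally generated. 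Hence for $i+j>\dim X$ that sheaf is globally generated and has no global sections, so it vanishes, and therefore $\mathrm{R}^i\bar f_*\bar{\cF} = 0$.

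It remains to restrict to $Y$. Since $\bar f^{-1}(Y) = X$ we have $(\mathrm{R}^i\bar f_*\bar{\cF})|_Y = \mathrm{R}^if_*(\bar{\cF}|_X)$; and since $\bar X\setminus X$ is disjoint from $X$, one has $\Omega_{\bar X}^j(\log\bar D)|_X = \Omega_X^j(\log D)$, while $\bar A|_X = A$, $\bar F|_X = F$, $\bar G|_X = G$, so $\bar{\cF}|_X = \Omega_X^j(\log D)(F+A-G)$. Thus $\mathrm{R}^if_*(\Omega_X^j(\log D)(F+A-G)) = 0$ for $i+j>\dim X$, and the second vanishing follows from $F+A=\lceil A\rceil$.

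I expect the single real obstacle to be the extension of $A$ to an ample $\bbQ$-divisor on $\bar X$ with fractional part inside $\bar D$: keeping the fractional part under control while moving the class into the ample cone is the delicate point, and is precisely the manipulation carried out in \cite[proof of Theorem~1-2-3]{KMM}. The remaining ingredients---Leray, Serre vanishing, the projection formula---are formal.
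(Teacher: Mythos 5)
Your argument is exactly the one the paper intends: the paper gives no proof beyond citing the argument of Theorem 1-2-3 in \cite{KMM}, and your proposal spells out precisely that reduction (localize on $Y$, compactify, extend $A$ to an ample $\bbQ$-divisor with controlled fractional part, apply the absolute Theorem \ref{mainvan} after twisting by $m\bar f^*\bar H$, and conclude via Serre vanishing, Leray, and global generation). You correctly identify the only delicate point --- producing the ample extension $\bar A$ with $\bar F\leq\bar D$ --- and defer it to the same KMM argument the paper cites, so the proposal is correct and essentially identical in approach.
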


\subsection{Failure of a stronger version}

Consider the statement of the Kawamata-Viehweg Vanishing Theorem:
$$\rH^i(X,{\mathcal O}_X(K_X + B + F + A)) = \rH^i(X,{\mathcal O}_X(K_X + B + \lceil A\rceil) = 0 \text{ for }i > 0.$$
In this case, we observe that the only conditions on $B$ and $F$ are:
\begin{enumerate}
\item[(i)] $\text{Supp}(B \cup F)$ is a simple normal crossings divisor, and
\item[(ii)] $B$ and $F$ share no common components.
\end{enumerate}

\begin{remark} 
Suppose $B$ and $F$ had a common component. If this common component is locally defined by $\{x = 0\}$, then $K_X + B + F$ has a local generator of the form $\dfrac{dx}{x^{1 + \delta}} \wedge \cdots$ with $\delta > 0$.  However, this violates the standard philosophy that, in an appropriate logarithmic formulation, one should have no worse than simple poles (i.e., $dx/x^1 = d(\log x)$).
\end{remark}

Note that setting $j = \dim X$ and $B:= D -G$, Theorem \ref{mainvan} implies the Kawamata-Viehweg Vanishing Theorem recalled above. In this case, condition (ii) above follows from the condition $F \leq G \leq D$.\\

On the other hand, still staying in line with the above philosophy, one could imagine the following stronger formulation of Theorem \ref{mainvan}. Let $X$ be as before, $D = \sum_i D_i$ a simple normal crossings divisor on $X$, $A$ an ample $\bbQ$-divisor, and $F:= \lceil A\rceil - A$ such that $\text{Supp}(D \cup F)$ is also a simple normal crossings divisor.  Let $G$ be an integral divisor such that \text{$D \cap F \leq G \leq D$}, where $D \cap F := \sum_{D_i \subset \text{Supp}(F)}D_i$. Then one is led to consider the following stronger vanishing statement where we do not require $F$ to be contained in $G$ or $D$:
\begin{equation*}
\rH^i(X,\Omega_X^j(\log(D))(F + A - G)) = \rH^i(X,\Omega_X^j(\log(D))(\lceil A\rceil - G) = 0
\text{ for } i + j > \dim X.
\end{equation*}

Note that, if $j = \dim X$, then this stronger formulation is actually equivalent to Theorem \ref{mainvan}. Moreover, in this case, they are also both equivalent to the Kawamata-Viehweg Vanishing Theorem.\\

On the other hand, if $j < \dim X$, then the stronger formulation above differs from Theorem \ref{mainvan}.
In fact, if $D=0$, then the stronger formulation would imply that
$$\rH^i(X,\Omega_X^j(\lceil A\rceil)) = 0 \text{ for }i + j > \dim X.$$
In view of the following statement of the Kawamata-Viehweg Vanishing (without any integral part $B$ of the boundary divisor)
$$\rH^i(X,\Omega_X^{\dim X}(\lceil A\rceil)) = \rH^i(X,{\mathcal O}_X(K_X + \lceil A\rceil)) = 0 \text{ for }i > 0,$$
this could be interpreted as a Kawamata-Viehweg type formulation of the Akizuki-Nakano Vanishing Theorem.
However, this naive formulation, as well as the afore-mentioned stronger formulation, fails to hold!\\

In fact, one can also consider the following relative version of the stronger formulation with $D = 0$:
$$\rR^if_*\Omega_X^j(\lceil A\rceil) = 0 \text{ for }i + j > \dim X,$$
where $f:X \rightarrow Y$ is a projective morphism, $A$ is an $f$-ample ${\mathbb Q}$-divisor, and where $F = \lceil A\rceil - A$ is a simple normal crossings divisor on $X$.
However, the following example demonstrates that this statement fails.\\

\begin{example}\label{cexamp1}
Let $Y$ be a non-singular 3-fold, $f:X \rightarrow Y$ be the blow up of a point $P \in Y$, and $E := f^{-1}(P)$ be the exceptional divisor. Then $A = - \epsilon E$ is an $f$-ample ${\mathbb Q}$-divisor for some sufficiently small and positive rational number $0 < \epsilon << 1$. According to the stronger formulation, we should have
$$\rR^2f_*\Omega_X^2(\lceil A\rceil) = \rR^2f_*\Omega_X^2 = 0.$$
On the other hand, we have an exact sequence of coherent ${\mathcal O}_X$-modules
$$0 \longrightarrow {\mathcal K}\ \longrightarrow \Omega_X^2 \overset{\phi}\longrightarrow \Omega_E^2 \rightarrow 0,$$
where $\phi$ is the restriction map and ${\mathcal K}$ is the kernel of the map $\phi$.  The associated long exact sequence gives
$$\rR^2f_*\Omega_X^2 \longrightarrow \rR^2f_*\Omega_E^2 \cong \rH^2({\mathbb P}^2,\Omega_{{\mathbb P}^2}^2) \longrightarrow \rR^3f_*{\mathcal K} = 0.$$
Here the last term vanishes as the fibers of $f$ have dimension at most $2$.
Since by the Serre duality 
$$\rH^2({\mathbb P}^2,\Omega_{{\mathbb P}^2}^2)  \cong \rH^0({\mathbb P}^2, {\mathcal O}_{{\mathbb P}^2}) \cong {\mathbb C} \neq 0,$$
we conclude that
$$\rR^2f_*\Omega_X^2 \neq 0.$$
\end{example}
\subsection{Replacing the condition of $A$ being ample with nef and big}

The statement of the Kodaira Vanishing holds even if we replace an ample divisor $A$ with a nef and big divisor $L$:
$$\rH^i(X,{\mathcal O}_X(K_X + L)) = 0 \text{ for }i > 0,$$
where $X$ is a nonsingular projective variety and $L$ is an (integral) nef and big divisor on $X$.  \\

The proof of this statement via the Kawamata-Viehweg Vanishing for a klt pair $(X,\Delta)$ (``klt'' is short for ``Kawamata log terminal'' singularities) goes as follows.  Since $L$ is big, by the so-called Kodaira Lemma, we can write $L$ as a ${\mathbb Q}$-divisor
$$L = M + H,$$
where $M$ is an effective divisor and $H$ is an ample divisor.  For $n \in {\mathbb N}$, we have another equation of ${\mathbb Q}$-divisors:
$$L = \dfrac{1}{n}\{L + (n - 1)L\} = \dfrac{1}{n}\{M + H + (n - 1)L\} = \dfrac{1}{n}M + \dfrac{1}{n}\{H + (n - 1)L\}.$$
Here $A := \dfrac{1}{n}\{H + (n - 1)L\}$ is an ample ${\mathbb Q}$-divisor, and \text{the pair $(X, \Delta = \dfrac{1}{n}M)$} is klt for $n$ sufficiently large.  As an application of the Kawamata-Viehweg Vanishing Theorem to the klt pair $(X,\Delta)$ we obtain:
$$\rH^i(X,{\mathcal O}_X(K_X + L)) = \rH^i(X,{\mathcal O}_X(K_X + \Delta + A)) = 0 \text{ for }i > 0.$$
Note that in the original setting with the SNC divisor $F = \lceil A\rceil - A$, the klt pair we consider is $(X,\Delta = F)$, and that we obtain
$$\begin{array}{rcl}
\rH^i(X,{\mathcal O}_X(K_X + \lceil A\rceil)) &=& \rH^i(X,{\mathcal O}_X(K_X + F + A)) \\
&=& \rH^i(X,{\mathcal O}_X(K_X + \Delta + A)) = 0 \text{ for }i > 0.\\
\end{array}$$\\

However, it is well-known that the Akizuki-Nakano Vanishing fails if we replace, in its formulation, an ample divisor $A$ with a nef and big divisor $L$ (cf. 4.3.4 \cite{Lazarsfeld}, see also examples \ref{cexamp2.3.1}, \ref{cexamp2.3.2}, \ref{cexamp2.3.3} below.). In particular, there is an example where we have
$$\rH^i(X,\Omega_X^j(L)) \neq 0 \text{ and }i + j > \dim X,$$
where $X$ is a nonsingular projective variety over ${\mathbb C}$ and $L$ is an integral nef and big divisor on $X$.  One might consider this to be a ``pathology'' if one expects that the Akizuki-Nakano Vanishing for a klt pair $(X,\Delta)$ should hold, and hence that one should have for a nef and big divisor $L = \Delta + A$ as above
$$\begin{array}{rcl}
\rH^i(X,\Omega_X^j(L)) &=& \rH^i(X,\Omega_X^j(\Delta + A)) \\
&=& \rH^i(X,\Omega_X^j(\lceil A\rceil)) = 0 \text{ and }i + j > \dim X.\\
\end{array}$$
However, this is exactly the statement of the stronger version of our main theorem discussed above, which we saw fails to hold. Therefore, in the above sense, we may say that the failure of the Akizuki-Nakano Vanishing for a nef and big divisor and the failure of the stronger version of its Kawamata-Viehweg type formulation share the same origin.\\

\begin{example}\label{cexamp2.3.1}
Let $f:X \rightarrow Y$ be the blow up of a point $P \in Y$ on a nonsingular 3-fold $Y$ as in Example \ref{cexamp1}.  Let $L = \pi^*H$ be the pull-back of an ample divisor $H$ on $Y$.  In this case, $L$ is nef and big. Then
$$\rR^2f_*(\Omega_X^2(L)) = \rR^2f_*(\Omega_X^2(\pi^*H)) \cong \rR^2f_*(\Omega_X^2) \otimes H \neq 0,$$
since $\rR^2f_*\Omega_X^2 \neq 0$. In particular, this shows the failure of the (relative) Akizuki-Nakano Vanishing, when we replace an ample divisor $A$ with a nef and big divisor $L$.
\end{example}

\begin{example}\label{cexamp2.3.2}
In the previous example, we can also take $L$ to be the structure sheaf ${\mathcal O}_X$, which is $f$-nef and $f$-big.  Then we have
$$\rR^2f_*(\Omega_X^2(L)) = \rR^2f_*(\Omega_X^2) \neq 0.$$
In particular, this shows the failure of the (relative) Akizuki-Nakano Vanishing, when we replace a relative ample divisor $A$ with a relative nef and big divisor $L$.
\end{example}

\begin{example}\label{cexamp2.3.3} Let $f:X \rightarrow Y$ be as in Example \ref{cexamp2.3.1}.  Consider an ample divisor $H$ on $Y$ and, a sufficiently small and positive rational number $0 < \epsilon << 1$ such that $A = \pi^*H - \epsilon E$ is ample on $X$.  Then looking at the Leray spectral sequence, one immediately sees that
$$\rH^2(X,\Omega_X^2(\lceil A\rceil)) = \rH^2(X,\Omega_X^2(L)) \neq 0.$$
This provides counter-examples to the stronger version of our formulation and the Akizuki-Nakano Vanishing for a nef and big line bundle in the absolute setting.
\end{example}
\subsection{Special cases of the main vanishing result}\
We discuss various special cases of Theorem \ref{mainvan}.

\begin{case} $A$ is integral and $G = 0$.\\
This case yields the Esnault-Viehweg Vanishing Theorem
$$\rH^i(X,\Omega_X^j(\log(D))( A)) = 0 \text{ for }i + j > \dim X.$$
When $j = \dim X$, it yields the logarithmic version of \text{the Kodaira Vanishing Theorem.}
\end{case}

\begin{case} $j = \dim X$\\
By setting $B = D - G$, this case yields the Kawamata-Viehweg Vanishing Theorem:
$$
\rH^i(X,\Omega_X^{\dim X}(D + \lceil A\rceil - G)) = \rH^i(X,{\mathcal O}_X(K_X + B + F + A)) = 0 \text{ for }i > 0.$$
Here $B$ and $F$ share no common components because of the condition $F \leq G \leq D$.
\end{case}

\begin{case} $G = D$.\\
This case yields
$$\begin{array}{rcl}
\rH^i(X,\Omega_X^j(\log(D))(F + A - D)) &=& \rH^i(X,\Omega_X^j(\log(D))(\lceil A\rceil - D) )\\
&=& 0 \hskip.4in \text{ for }i + j > \dim X.\\
\end{array}$$
This is the fractional version of the Steenbrink Vanishing Theorem, which appears in \cite{Arapura2}. We note that when $j = \dim X$ and $A$ is integral, we recover the Kodaira Vanishing Theorem, but not its logarithmic version (unless we use the round up trick \ref{helmke}).
\end{case}
\begin{case} $D = G = E$, where $E$ is the support of a projective birational map $f: X \rightarrow Y$.\\
Consider a projective birational map $f: X \rightarrow Y$ from a nonsingular variety $X$. Then Corollary \ref{relmainvan} implies
$$\rR^if_*\Omega^j_X(\log(E))(\lceil A\rceil - E) = \rR^if_*\Omega^j_X(\log(E))(- E) = 0 \text{ for }i + j > \dim X$$
where $E = \sum E_i$ is the exceptional divisor (which is assumed to be a simple normal crossings divisor), $A = \sum - e_iE_i$ is an $f$-ample divisor with $\lceil A\rceil = 0$.  When $j = \dim X$, the statement becomes
$$R^if_*\omega_X = 0 \text{ for }i > 0,$$
which is nothing but the Grauert-Riemenschneider Vanishing Theorem.
\end{case}

\section{Proof of Theorem \ref{mainvan} by Kawamata Covering Lemma}

In this section, we provide a proof of Theorem \ref{mainvan} using the Kawamata Covering Lemma.

\subsection{The case when $A$ integral}
In this subsection, we prove Theorem \ref{mainvan} in the setting where $A$ is integral. We shall further split this case into subcases.

\begin{subcase} $G = D$.\\
In this case, the statement is nothing but the Steenbrink Vanishing Theorem.  Note that, when $G = D = 0$, we obtain the Akizuki-Nakano Vanishing Theorem.
\end{subcase}

\begin{subcase} $G \leq D' = D - D_1  = D_2 + \cdots + D_l <  D = D_1 + D_2 + \cdots + D_l$.\\
In this case, one proceeds via induction on the number of the components in $D$ and the dimension of $X$.  
Consider the residue sequence
$$
0 \rightarrow \Omega_X^j(\log(D'))(A - G)  
\rightarrow \Omega_X^j(\log(D))(A - G) 
\xrightarrow{\psi} \Omega_{D_1}^{j-1}(\log(D'|_{D_1}))((A - G)|_{D_1}) 
\rightarrow 0,\\
$$
where $\psi$ is the residue map. 
The corresponding long exact sequence in cohomology gives
\begin{align*}
\cdots \rightarrow  \rH^i(X,\Omega_X^j(\log(D'))(A - G)) 
\rightarrow  \rH^i(X, \Omega_X^j(\log(D))(A - G)) \\
\rightarrow  \rH^i(D_1,\Omega_{D_1}^{j-1}(\log(D'|_{D_1}))((A - G)|_{D_1})) \rightarrow \cdots.
\end{align*}

If $i + j > \dim X$, then the first term is $0$ by induction on the number of the components in $D$ (since the number of the components in $D'$ is one less than that of $D$). On the other hand, if $i + j > \dim X$, the last term is also $0$ by induction on the dimension of $X$ (since $\dim D_1 = \dim X - 1$ and since $i + (j-1) = i + j - 1 > \dim X - 1 = \dim D_1$).  Therefore, we conclude that
$$\rH^i(X, \Omega_X^j(\log(D))(A - G)) = 0$$
if $i + j > \dim X$.
\begin{remark} 
Note that using the residue sequence above with $G = 0$, one can derive the Esnault-Viehweg Vanishing from the Akizuki-Nakano Vanishing via induction on the number of the components in $D$ and the dimension of $X$.  But, it seems that the Steenbrink Vanishing cannot be derived from the Akizuki-Nakano Vanishing via a simple inductive argument using the residue sequence above.
\end{remark}
\end{subcase}

\subsection{The case when $A$ fractional}
We reduce the case where $A$ is fractional to the case where $A$ is integral, using the following Kawamata Covering Lemma.
\begin{lemma}[{\bf Kawamata Covering Lemma}(\cite{Kawamata,KMM,Matsuki}]\label{covlem}
There exists a finite morphism $\pi:Y \rightarrow X$ with the extension of the function fields ${\mathbb C}(Y)/{\mathbb C}(X)$ being Galois (and hence $\Gamma := \mathrm{Gal}({\mathbb C}(Y)/{\mathbb C}(X))$ acts on $Y$ over $X$) such that:\begin{enumerate}
\item[(i)] $Y$ is nonsingular projective.
\item[(ii)] $\pi^*A$ is integral.
\item[(iii)] $\pi$ is ramifed only along $D \cup M$, which forms an SNC divisor for some auxiliary divisor $M$, with $D$ and $M$ sharing no common components.
\item[(iv)] There is a sufficiently divisible and large integer $m \in {\mathbb N}$ such that, for any irreducible component $B$ in $D \cup M$, we have 
$$\pi^*B = mB_Y$$ 
where $B_Y = \pi^{-1}(B)_{\text{red}}$ and that, if $B \subset F$, we have
$$(\star)\ a_B + \dfrac{m - 1}{m} \geq \lceil a_B\rceil.$$
Here $a_B$ is the coefficient of $B$ in $A$.
\item[(v)] For any closed point $P \in X$ there exists a regular system of parameters \linebreak $(x_1, \ldots, x_l, x_{l+1}, \ldots, x_n)$ such that 

$\bullet$ $\{\prod_{\alpha = 1}^l x_{\alpha} = 0\} = (D \cup M)_P$, and 

$\bullet$ any closed point $Q \in \pi^{-1}(P)$ has a regular system of paramaters of the form $(y_1 = x_1^{1/m}, \ldots, y_l = x_l^{1/m}, x_{l+1}, \ldots, x_n)$ \text{(for the same integer ``$m$'' mentioned in condition (iv)).}
\end{enumerate}
\end{lemma}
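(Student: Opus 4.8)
The plan is to recall the standard Kawamata covering construction of \cite{Kawamata,KMM,Matsuki}, which produces $Y$ as the normalization of a fiber product of Kummer (cyclic) covers. First I would fix the integer $m$: take $m$ divisible by the denominators of all coefficients $a_B$ of $A$ (so that $mA$ is integral), and in addition large enough that $1/m \le 1 - \lceil a_B\rceil + a_B$ for each of the finitely many components $B$ with $a_B \notin \bbZ$; the right-hand side is a fixed positive number, so this holds for $m \gg 0$, and it is exactly the inequality $(\star)$ (automatic when $a_B \in \bbZ$). Since a component $B$ of $A$ with $a_B \notin \bbZ$ has positive coefficient in $F$, it is a component of $D$; hence it suffices to build ramification along the components of $D$.

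Next I would produce the auxiliary divisor $M$. Fix a very ample $H$ on $X$ with $mH - D_i$ very ample for every component $D_i$ of $D$, and choose, by Bertini, a general member $M_i \in |mH - D_i|$ for each $i$; set $M = \sum_i M_i$. Then $D_i + M_i \in |mH|$, so $\mathcal{O}_X(D_i + M_i) \cong \mathcal{O}_X(H)^{\otimes m}$, while the general choice makes $D \cup M$ a simple normal crossings divisor in which no $M_i$ shares a component with $D$. Let $\pi_i : Y_i \to X$ be the degree-$m$ cyclic cover attached to this isomorphism and the tautological section cutting out $D_i + M_i$, and let $\pi : Y \to X$ be the normalization of $Y_1 \times_X \cdots \times_X Y_l$. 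Then $Y$ is irreducible and projective, $\bbC(Y)/\bbC(X)$ is Galois with group $\Gamma \cong (\bbZ/m)^l$, and $\pi$ is \'etale outside $D \cup M$ and ramified to order exactly $m$ along each component there; this gives (iii) together with the identity $\pi^*B = mB_Y$ of (iv). Condition (ii) then follows: for a component $B \not\subseteq D$ the coefficient $a_B$ is an integer and $\pi^*B$ is integral, while for $B \subseteq D$ one has $\pi^*B = mB_Y$ with $ma_B \in \bbZ$, so $\pi^*A$ is integral.

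The step I expect to be the real obstacle is proving that $Y$ is smooth with the explicit local model of (v). Away from $D \cup M$ there is nothing to check since $\pi$ is \'etale; along $\pi^{-1}(D \cup M)$ the naive fiber product of the $Y_i$ can be singular (over a point where $D_i$ meets its partner $M_i$, the cover $Y_i$ is locally $\{t^m = x_1 x_2\}$), and the content of the construction is that, after the general choice of the $M_i$ (the Bloch--Gieseker genericity input), the normalized fiber product acquires, over any closed point $P$ with local equation $\prod_{\alpha = 1}^{l} x_\alpha = 0$ for $(D \cup M)_P$, the Kummer local model $\Spec \bbC[y_1, \dots, y_l, x_{l+1}, \dots, x_n]$ with $x_\alpha = y_\alpha^m$. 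This simultaneously yields the nonsingularity in (i), condition (v), and the remaining assertion of (iv). Since this local analysis is entirely standard, in the write-up I would simply invoke \cite{Kawamata,KMM,Matsuki} for it; the only bookkeeping genuinely specific to the present formulation is the elementary verification of $(\star)$ done in the first step.
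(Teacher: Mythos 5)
The paper itself offers no proof of this lemma; it is quoted from \cite{Kawamata,KMM,Matsuki}. So the only content genuinely specific to this formulation is your first step, and that part is fine: $(\star)$ is equivalent to $1/m \le 1-(\lceil a_B\rceil - a_B)$, whose right-hand side is positive for every component of $F$ and equal to $1$ when $a_B\in\bbZ$, and the integrality of $\pi^*A$ follows because $mA$ is integral and every component with non-integral coefficient lies in $D$ (as $F\le D$).

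The gap is in the construction itself: with a \emph{single} general member $M_i\in|mH-D_i|$ per component, the variety $Y$ you build is not smooth, so the ``standard local analysis'' you defer to the references would fail for your $Y$ --- it is not the construction those references analyze. Indeed $D_i\cap M_i\neq\emptyset$ once $\dim X\ge 2$ (e.g.\ $(mH-D_i)\cdot D_i\cdot H^{\dim X-2}>0$), and over a point of $D_i\cap M_i$ the cyclic cover $Y_i$ is locally $\{t^m=x_1x_2\}$: this hypersurface is already normal (singular in codimension two and Cohen--Macaulay) but has an $A_{m-1}$-singularity along $\{t=x_1=x_2=0\}$, which neither normalization nor the fiber product with the other $Y_j$ (\'etale there for general choices) removes. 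Equivalently, the inertia group of your $Y$ at such a point is cyclic of order $m$, whereas the local model in (v), with both $D_i$ and $M_i$ among the $x_\alpha$, forces inertia $(\bbZ/m)^{2}$. The standard fix is to take, for each $i$, not one but $n=\dim X$ general members $M_i^{(1)},\dots,M_i^{(n)}\in|mH-D_i|$ and to normalize the fiber product of all $nl$ cyclic covers. Then $D_i+\sum_k M_i^{(k)}$ is an SNC configuration of $n+1$ divisors, so at every closed point $P$ at least one $M_i^{(k_0)}$ misses $P$; the corresponding cover adjoins $x_i^{1/m}$ up to an \'etale unit, and dividing this into the remaining generators $\bigl(x_i h_i^{(k)}\bigr)^{1/m}$ shows that the local field extension is generated by $m$-th roots of individual members of a regular system of parameters, which is exactly the Kummer model of (v) and gives smoothness. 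With that correction (or with the Bloch--Gieseker variant, where one first makes $\cO_X(D_i)$ an $m$-th power by a covering under which the preimage of $D_i$ stays smooth, and only then takes a cyclic cover along a smooth divisor), the rest of your argument goes through unchanged.
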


\begin{lemma}\label{Ginv}  With notation as in Lemma \ref{covlem}, we have

$$\left[\pi_*\{\Omega^j(\mathrm{log}(D_Y))(\pi^*A - G_Y)\}\right]^{\Gamma} = \Omega^j(\mathrm{log}(D))(\lceil A\rceil - G).$$

\end{lemma}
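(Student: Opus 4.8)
The plan is to verify the identity locally at each closed point $P \in X$ and then patch, since both sides are subsheaves of the sheaf of meromorphic $j$-forms (with poles along $D$ and zeros/poles prescribed by the divisor), and equality of subsheaves can be checked stalk by stalk. First I would choose a regular system of parameters $(x_1, \ldots, x_l, x_{l+1}, \ldots, x_n)$ at $P$ as in condition (v) of Lemma \ref{covlem}, so that the components of $D \cup M$ through $P$ are exactly $\{x_\alpha = 0\}$ for $\alpha = 1, \ldots, l$, and a point $Q$ over $P$ has parameters $(y_1 = x_1^{1/m}, \ldots, y_l = x_l^{1/m}, x_{l+1}, \ldots, x_n)$. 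The key point is that $\pi$ is a Kummer cover, so the logarithmic forms pull back without ramification: $\pi^*(dx_\alpha/x_\alpha) = m\, dy_\alpha/y_\alpha$ is again a generator of $\Omega^1_Y(\log D_Y)$ at $Q$, and $\pi^* dx_t = dx_t$ for $t > l$. Hence a local frame for $\Omega^j_X(\log D)$ (the wedges $\bigwedge_{s \in S} dx_s/x_s \wedge \bigwedge_{t \in T} dx_t$ with $|S \cap \{1,\dots,l\}| = |S|$ and $|S| + |T| = j$) pulls back, up to units, to a local frame for $\Omega^j_Y(\log D_Y)$.

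Next I would unwind what the twisting divisors contribute. Writing $A = \sum_\alpha a_\alpha \{x_\alpha = 0\} + (\text{stuff not through } P)$ and $G = \sum_\alpha g_\alpha \{x_\alpha = 0\}$ with $g_\alpha \in \{0, 1\}$ and $g_\alpha = 1$ whenever $a_\alpha \notin \bbZ$ (forced by $F \le G \le D$), a local section of $\Omega^j_X(\log D)(\lceil A\rceil - G)$ is, after clearing the chosen frame, a rational function whose order along $\{x_\alpha = 0\}$ is $\ge g_\alpha - \lceil a_\alpha \rceil$. On the cover, using $\pi^* x_\alpha = y_\alpha^m$ and condition (iv), $\pi^*A$ has coefficient $m a_\alpha$ along $\{y_\alpha = 0\}$ (an integer) and $G_Y = \sum g_\alpha \{y_\alpha = 0\}$; a $\Gamma$-invariant local section of $\Omega^j_Y(\log D_Y)(\pi^*A - G_Y)$ is one whose order along $\{y_\alpha = 0\}$ is $\ge g_\alpha - m a_\alpha$ and which is moreover a power series in the $x_\alpha = y_\alpha^m$ (invariance under the Galois group, which acts on $y_\alpha$ by $m$-th roots of unity). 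The orders along $\{y_\alpha = 0\}$ come in two pieces: a contribution from whether the frame element involves $dy_\alpha/y_\alpha$, and a contribution from the rational-function coefficient $h$. The divisibility condition forces the $y_\alpha$-adic order of $h$ to be a multiple of $m$ (adjusted by the frame), and then inequality $(\star)$, namely $a_\alpha + \tfrac{m-1}{m} \ge \lceil a_\alpha \rceil$, is exactly what is needed to show that $\mathrm{ord}_{y_\alpha} \ge g_\alpha - m a_\alpha$ together with the $m$-divisibility is equivalent to $\mathrm{ord}_{x_\alpha} \ge g_\alpha - \lceil a_\alpha \rceil$ downstairs. I would run this comparison one index $\alpha$ at a time; the cases $g_\alpha = 0$ (so $a_\alpha \in \bbZ$, $\lceil a_\alpha\rceil = a_\alpha$) and $g_\alpha = 1$ need to be checked separately but both reduce to the same arithmetic lemma about $\lceil\, \cdot\,\rceil$, divisibility, and $(\star)$.

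Finally, having shown the two stalks coincide at every $P$, I conclude the sheaves are equal. The main obstacle I expect is bookkeeping: keeping straight the difference between the order of vanishing contributed by the logarithmic frame element (which is $-1$ along $\{y_\alpha=0\}$ or $\{x_\alpha=0\}$ precisely when $dx_\alpha/x_\alpha$ appears) versus the order contributed by the coefficient function, and making sure the $\Gamma$-invariance is translated correctly into an $m$-divisibility statement on these orders. Concretely, the crux is the elementary claim: for $c \in m\bbZ + r$ with $0 \le r < m$ the integer $c/m$ rounds to $\lceil a_\alpha \rceil$ under the hypothesis $(\star)$, which pins down when an invariant section upstairs descends to one with the prescribed pole/zero behavior downstairs. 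Once that arithmetic is isolated, the rest is a direct frame-by-frame verification.
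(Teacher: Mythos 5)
Your overall strategy is the same as the paper's: compare local frames under the Kummer cover, translate $\Gamma$-invariance into the statement that the coefficients lie in $\bbC(X)$ (equivalently, that their $y_\alpha$-adic orders are multiples of $m$), and then match the order conditions along each component using condition $(\star)$. The arithmetic crux you isolate --- that $(\star)$ is exactly what makes ``order $\geq g_\alpha - ma_\alpha$ upstairs plus $m$-divisibility'' equivalent to ``order $\geq g_\alpha - \lceil a_\alpha\rceil$ downstairs'' --- is correct and is the same computation the paper performs in its case-by-case display.

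There is, however, one step that fails as stated: the claim that a local frame of $\Omega^j_X(\log D)$ pulls back, up to units, to a local frame of $\Omega^j_Y(\log D_Y)$. The cover $\pi$ ramifies along $D\cup M$, not just along $D$. For a coordinate $x_w$ cutting out a component of the auxiliary divisor $M$ (which is \emph{not} contained in $D$), the frame of $\Omega^j_X(\log D)$ uses $dx_w$, and $\pi^* dx_w = m\,y_w^{m-1}\,dy_w$ acquires a zero of order $m-1$ along $M_Y$; it is not a unit multiple of the corresponding frame element $dy_w$ of $\Omega^j_Y(\log D_Y)$. Your two-piece decomposition of the order along $\{y_\alpha=0\}$ (log-pole of the frame element plus order of the coefficient) therefore misses a third contribution along $M$, and your indexing (treating all of $\alpha=1,\dots,l$, i.e.\ all of $D\cup M$, as if the frame element were $dy_\alpha/y_\alpha$) conflates $D$ with $D\cup M$. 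This is precisely the bookkeeping the paper's proof is organized around: it uses the frame of $\Omega^j_X(\log(D\cup M))$ as the common reference on both sides, records the discrepancy with the $\Omega^j_Y(\log D_Y)$-frame as the division by $\prod_{w\in W_\delta} y_w$, and checks the case $B\subset M$ separately. The good news is that the correction does not change the answer: along $M$ one has $a_B\in\bbZ$ and $G$ has coefficient $0$, and the extra fractional discrepancy (whether $\tfrac{1}{m}$ or $\tfrac{m-1}{m}$, depending on whether the frame element contains $dx_w$) is absorbed by integrality of $v_B(h)+a_B$. So your argument is repairable, but as written the key ``up to units'' assertion is false and the $M$-components are unaccounted for.
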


\begin{proof}

First note that
$$\begin{array}{rcl}
\Omega_Y^j(\mathrm{log}(D_Y))(\pi^*A - G_Y) &\subset& \Omega_Y^j(\mathrm{log}((D \cup M)_Y)) \otimes_{{\mathcal O}_Y}{\mathbb C}(Y) \\
&=& \pi^*\{\Omega_X^j(\mathrm{log}(D \cup M))\} \otimes_{{\mathcal O}_Y}{\mathbb C}(Y) \\
\end{array}$$
and hence that
$$\pi_*\{\Omega_Y^j(\mathrm{log}(D_Y))(\pi^*A - G_Y)\} \subset \Omega_X^j(\mathrm{log}(D \cup M)) \otimes_{{\mathcal O}_X} {\mathbb C}(Y).$$
The $\Gamma$-action on the left-hand side is induced from the $\Gamma$-action on the right-hand side, where $\Gamma$ acts trivially on the first factor $\Omega_X^j(\mathrm{log}(D \cup M))$ and $\Gamma$ acts on the second factor ${\mathbb C}(Y)$ as the Galois group $\mathrm{Gal}({\mathbb C}(Y)/{\mathbb C}(X))$.  Therefore, we conclude
$$\left[\pi_*\{\Omega_Y^j(\mathrm{log}(D_Y))(\pi^*A - G_Y)\}\right]^{\Gamma} \subset \Omega_X^j(\mathrm{log}(D \cup M)) \otimes_{{\mathcal O}_X} {\mathbb C}(X).$$
Our task is to identify the left-hand side with another subsheaf of the right-hand side
$$\Omega_X^j(\mathrm{log}(D))(\lceil A\rceil - G) \subset \Omega_X^j(\mathrm{log}(D \cup M)) \otimes_{{\mathcal O}_X} {\mathbb C}(X).$$

For a closed point $P \in X$, we choose a regular system of parameters 
$$(\{x_s\}_{s \in S},\{x_t\}_{t \in T}, \{x_v\}_{v \in V}, \{x_w\}_{w \in W}, \{x_z\}_{z \in Z})$$
as in condition (v) of the Kawamata Covering Lemma and an affine open neighborhood $P \in U$ such that
$$\left\{\begin{array}{lcl}
\{x_s = 0\}_{s \in S} &=& (D \setminus G)_P = (D \setminus G)|_U \\
\{x_t = 0\}_{t \in T} &=& (G \setminus F)_P = (G \setminus F)|_U \\
\{x_v = 0\}_{v \in V} &=& F_P = F|_U \\
\{x_w = 0\}_{w \in W} &=& M_P = M|_U \\
\{x_z = 0\}_{z \in Z} &&\text{shares no components with }(D \cup M)_P \text{ or }(D \cup M)|_U,\\
\end{array}\right.$$
and that
$$\left\{\bigwedge_{s \in S_{\alpha} \subset S}\dfrac{dx_s}{x_s} \bigwedge_{t \in T_{\beta} \subset T}\dfrac{dx_t}{x_t} \bigwedge_{v \in V_{\gamma} \subset V}\dfrac{dx_v}{x_v} \bigwedge_{w \in W_{\delta} \subset W}\dfrac{dx_w}{x_w} \bigwedge_{z \in Z_{\epsilon} \subset Z}dx_z\right\},$$
where the collection of the subsets $S_{\alpha} \subset S, T_{\beta} \subset T, V_{\gamma} \subset V, W_{\delta} \subset W, Z_{\epsilon} \subset Z$ is the one of all those with $\# S_{\alpha} + \# T_{\beta} + \# V_{\gamma} + \# W_{\delta} + \# Z_{\epsilon} = j$, forms a basis of $\Omega_X^j(\log(D \cup M))$ as a free ${\mathcal O}_X$-module over $U$, while
$$\left\{\bigwedge_{s \in S_{\alpha} \subset S}\dfrac{dx_s}{x_s} \bigwedge_{t \in T_{\beta} \subset T}\dfrac{dx_t}{x_t} \bigwedge_{v \in V_{\gamma} \subset V}\dfrac{dx_v}{x_v} \bigwedge_{w \in W_{\delta} \subset W}dx_w \bigwedge_{z \in Z_{\epsilon} \subset Z}dx_z\right\}$$
forms a basis of $\Omega_X^j(\log(D))$ as a free ${\mathcal O}_X$-module over $U$.

Since $\pi$ ramifies only over $D \cup M$, we conclude that
$$\begin{array}{l}
\left\{\pi^*\left[\bigwedge_{s \in S_{\alpha} \subset S}\dfrac{dx_s}{x_s} \bigwedge_{t \in T_{\beta} \subset T}\dfrac{dx_t}{x_t} \bigwedge_{v \in V_{\gamma} \subset V}\dfrac{dx_v}{x_v} \bigwedge_{w \in W_{\delta} \subset W}\dfrac{dx_w}{x_w} \bigwedge_{z \in Z_{\epsilon} \subset Z}dx_z\right] \right\} \\
= \left\{\bigwedge_{s \in S_{\alpha} \subset S}\dfrac{dy_s}{y_s} \bigwedge_{t \in T_{\beta} \subset T}\dfrac{dy_t}{y_t} \bigwedge_{v \in V_{\gamma} \subset V}\dfrac{dy_v}{y_v} \bigwedge_{w \in W_{\delta} \subset W}\dfrac{dy_w}{y_w} \bigwedge_{z \in Z_{\epsilon} \subset Z}dx_z \right\} \\
\end{array}$$
forms a basis of $\Omega_Y^j(\log((D\cup M)_Y))$ as a free ${\mathcal O}_Y$-module over $\pi^{-1}(U)$, while 
$$\left\{\bigwedge_{s \in S_{\alpha} \subset S}\dfrac{dy_s}{y_s} \bigwedge_{t \in T_{\beta} \subset T}\dfrac{dy_t}{y_t} \bigwedge_{v \in V_{\gamma} \subset V}\dfrac{dy_v}{y_v} \bigwedge_{w \in W_{\delta} \subset W}dy_w \bigwedge_{z \in Z_{\epsilon} \subset Z}dx_z \right\}$$
forms a basis of $\Omega_Y^j(\log((D)_Y))$ as a free ${\mathcal O}_Y$-module over $\pi^{-1}(U)$.

Take a section
$$\begin{array}{rcl}
f &\in& \Gamma(\pi^{-1}(U), \Omega_Y^j(\mathrm{log}((D \cup M)_Y)) \otimes_{{\mathcal O}_Y}\bbC(Y)) \\
&=& \Gamma(\pi^{-1}(U), \pi^*\{\Omega_X^j(\mathrm{log}(D \cup M))\} \otimes_{{\mathcal O}_Y}\bbC(Y)) \\
\end{array}$$
and write
$$f = \sum_{\alpha, \beta, \gamma, \delta, \epsilon} \left(\pi^*\left[\bigwedge_{s \in S_{\alpha} \subset S}\dfrac{dx_s}{x_s} \bigwedge_{t \in T_{\beta} \subset T}\dfrac{dx_t}{x_t} \bigwedge_{v \in V_{\gamma} \subset V}\dfrac{dx_v}{x_v} \bigwedge_{w \in W_{\delta} \subset W} \dfrac{dx_w}{x_w} \bigwedge_{z \in Z_{\epsilon} \subset Z}dx_z\right] \otimes f_{\alpha,\beta,\gamma,\delta,\epsilon}\right)$$
with $f_{\alpha,\beta,\gamma,\delta,\epsilon} \in {\mathbb C}(Y)$.  

\vskip.03in

Observe
$$\begin{array}{rcl}
f &\in& \Gamma(U, \pi_*\{\Omega_Y^j(\mathrm{log}(D_Y))(\pi^*A - G_Y)\}) \\
&=& \Gamma(\pi^{-1}(U), \Omega_Y^j(\mathrm{log}(D_Y)(\pi^*A - G_Y)) \\
&\Longleftrightarrow& \text{div}\left(f_{\alpha,\beta,\gamma,\delta,\epsilon}/\prod_{w \in W_{\delta} \subset W}y_w\right) + \pi^*A - G_Y|_{\pi^{-1}(U)} \geq 0, \\
&& f_{\alpha,\beta,\gamma,\delta,\epsilon} \in {\mathbb C}(Y), \forall \alpha, \beta, \gamma, \delta, \epsilon \\
&\Longleftrightarrow& \text{div}\left(f_{\alpha,\beta,\gamma,\delta,\epsilon} /\prod_{w \in W_{\delta} \subset W}\pi^*(x_w)^{1/m}\right) + \pi^*A - G_Y|_{\pi^{-1}(U)} \geq 0, \\
&& f_{\alpha,\beta,\gamma,\delta,\epsilon} \in {\mathbb C}(Y), \forall \alpha, \beta, \gamma, \delta, \epsilon.\\
\end{array}$$
Therefore, we conclude
$$\begin{array}{rcl}
f &\in& \Gamma(U, \left[\pi_*\{\Omega_Y^j(\mathrm{log}(D_Y))(\pi^*A - G_Y)\}\right]^{\Gamma}) \\
&\Longleftrightarrow& \text{div}\left(f_{\alpha,\beta,\gamma,\delta,\epsilon}/\prod_{w \in W_{\delta} \subset W}\pi^*(x_w)^{1/m}\right) + \pi^*A - G_Y|_{\pi^{-1}(U)} \geq 0, \\
&& f_{\alpha,\beta,\gamma,\delta,\epsilon} \in {\mathbb C}(Y)^\Gamma = {\mathbb C}(X), \forall \alpha, \beta, \gamma, \delta, \epsilon\\
&\Longleftrightarrow& \text{div}\left(f_{\alpha,\beta,\gamma,\delta,\epsilon}/\prod_{w \in W_{\delta} \subset W}\pi^*(x_w)^{1/m}\right) + \pi^*A - \pi^*(G) + \pi^*\left(\dfrac{m - 1}{m}G\right)|_{\pi^{-1}(U)} \geq 0, \\
&& f_{\alpha,\beta,\gamma,\delta,\epsilon} \in {\mathbb C}(Y)^\Gamma = {\mathbb C}(X), \forall \alpha, \beta, \gamma, \delta, \epsilon\\
&\Longleftrightarrow& \text{div}\left(f_{\alpha,\beta,\gamma,\delta,\epsilon} / \prod_{w \in W_{\delta} \subset W}x_w^{1/m}\right) + A - G + \dfrac{m - 1}{m}G|_U \geq 0, \\
&& f_{\alpha,\beta,\gamma,\delta,\epsilon} \in {\mathbb C}(X), \forall \alpha, \beta, \gamma, \delta, \epsilon\\
&\Longleftrightarrow& \text{div}\left(f_{\alpha,\beta,\gamma,\delta,\epsilon} / \prod_{w \in W_{\delta} \subset W}x_w\right) + \lceil A\rceil - G|_U \geq 0, \\
&& f_{\alpha,\beta,\gamma,\delta,\epsilon} \in {\mathbb C}(X), \forall \alpha, \beta, \gamma, \delta, \epsilon\\
&\Longleftrightarrow& \\
f &\in& \Gamma(U, \Omega^j(\mathrm{log}(D))(\lceil A\rceil - G)). \\
\end{array}$$
We may add the following explanation for the second last equivalence: Let $B$ vary among all the irreducible components of $D \cup M|_U$.  Then the  3rd last condition
$$\text{div}\left(f_{\alpha,\beta,\gamma,\delta,\epsilon} / \prod_{w \in W_{\delta} \subset W}x_w^{1/m}\right) + A - G + \dfrac{m - 1}{m}G|_U \geq 0$$
reads for the component $B$:
$$\left\{\begin{array}{cll}
\bullet &v_B(f_{\alpha,\beta,\gamma,\delta,\epsilon}) + a_B - 0 + \dfrac{m-1}{m} \cdot 0 \geq 0 & \\
\Longleftrightarrow & v_B(f_{\alpha,\beta,\gamma,\delta,\epsilon}) + a_B - 0 = v_B(f_{\alpha,\beta,\gamma,\delta,\epsilon}) + \lceil a_B\rceil - 0 \geq 0 &\text{ if } B \subset D \setminus G \\
a_B \in {\mathbb Z}&& \\
\bullet &v_B(f_{\alpha,\beta,\gamma,\delta,\epsilon}) + a_B - 1 + \dfrac{m-1}{m} \cdot 1 \geq 0 & \\
\Longleftrightarrow & v_B(f_{\alpha,\beta,\gamma,\delta,\epsilon}) + a_B - 1 = v_B(f_{\alpha,\beta,\gamma,\delta,\epsilon}) + \lceil a_B\rceil - 1 \geq 0 &\text{ if } B \subset G \setminus F \\
a_B \in {\mathbb Z}&& \\
\bullet &v_B(f_{\alpha,\beta,\gamma,\delta,\epsilon}) + a_B - 1 + \dfrac{m-1}{m} \cdot 1 & \\
=&  v_B(f_{\alpha,\beta,\gamma,\delta,\epsilon}) + a_B + \dfrac{m-1}{m} \cdot 1 - 1 \geq 0 &  \\
\Longleftrightarrow & v_B(f_{\alpha,\beta,\gamma,\delta,\epsilon}) + \lceil a_B\rceil - 1 \geq 0 & \text{ if } B \subset F \\
\text{by condition }(\star)&& \\
\bullet &v_B(f_{\alpha,\beta,\gamma,\delta,\epsilon}) - \dfrac{1}{m} + a_B - 0 + \dfrac{m-1}{m} \cdot 0 \geq 0 & \\
\Longleftrightarrow & v_B(f_{\alpha,\beta,\gamma,\delta,\epsilon}) - 1 +  a_B - 0 = v_B(f_{\alpha,\beta,\gamma,\delta,\epsilon}) - 1 +  \lceil a_B\rceil - 0 \geq 0 & \text{ if } B \subset M \\
a_B \in {\mathbb Z}&&   \\
\end{array}\right.$$
where $A = \sum a_BB$.

\end{proof}

\vskip.1in

Now Theorem \ref{mainvan} in the fractional case is an immediate consequence of \ref{Ginv} as follows:
$$\begin{array}{rcl}
\rH^i(X,\Omega_X^j(\mathrm{log}(D))(\lceil A\rceil - G)) &=& \rH^i\left(X,\left[\pi_*\{\Omega_Y^j(\mathrm{log}(D_Y))(\pi^*A - G_Y)\}\right]^\Gamma\right) \\
&=& \rH^i(X,\pi_*\{\Omega_Y^j(\mathrm{log}(D_Y))(\pi^*A - G_Y)\})^\Gamma \\
&=& \rH^i(Y,\Omega_Y^j(\mathrm{log}(D_Y))(\pi^*A - G_Y))^\Gamma \\
&=& 0, \\
\end{array}$$
since we have
$$ \rH^i(Y,\Omega^j(\mathrm{log}(D_Y))(\pi^*A - G_Y)) = 0 \text{ for }i + j > \dim X = \dim Y,$$
using the vanishing statement for the case where $\pi^*A$ is integral.
This completes the proof of the main theorem in the case where $A$ is fractional.

\subsection{Some remarks on the first proof}

\subsubsection{Basic Idea of the proof}
If we pretend that $\pi$ is ramified only over $D$, then the idea of the proof for \ref{Ginv} is more transparent. Under the pretension, since the logarithmic differential forms do not ramify and $G_Y = \dfrac{1}{m}\pi^*G = \dfrac{m-1}{m}\pi^*G - \pi^*G$, we have
$$\Omega_Y^j(\mathrm{log}(D_Y))(\pi^*A - G_Y) = \pi^*\{\Omega_X^j(\mathrm{log}(D))\}\left(\pi^*A + \dfrac{m-1}{m}\pi^*G - \pi^*G \right).$$
By taking $\pi_*$ and the $\Gamma$-invariant part, we conclude
$$\begin{array}{rcl}
\pi_*\{\Omega_Y^j(\mathrm{log}(D_Y))(\pi^*A - G_Y)\}^\Gamma &=& \pi_*\left[\pi^*\left\{\Omega_X^j(\mathrm{log}(D))\right\}\left(\pi^*A + \dfrac{m-1}{m}\pi^*G - \pi^*G\right) \right]^\Gamma \\
&=& \Omega_X^j(\mathrm{log}(D))\left(A + \dfrac{m-1}{m}G - G\right) \\
&=& \Omega_X^j(\mathrm{log}(D))\left(\lceil A\rceil - G\right).\\
\end{array}$$
Here the last equality, replacing $A + \dfrac{m-1}{m}G$ with $\lceil A\rceil$, results from the fact that only the fractional part of $A$ is affected and, hence that the coefficients of the components exceed their round ups when we add $\dfrac{m-1}{m}G$.\\

In the actual proof without the pretension, we have to analyze in more detail how a basis of the free ${\mathcal O}_X$-module $\Omega_X^j(\mathrm{log}(D))$ ramifies over $M$, when pulled back by $\pi$, compared to a basis of the free ${\mathcal O}_Y$-module $\Omega_Y^j(\mathrm{log}(D_Y))$ (and conclude that the ramification does not affect the conclusion at all).  The basic idea, however, is the same.

\subsubsection{Use of the logarithmic forms and subtraction of the divisor $G$}\label{uselog}

In contrast to the logarithmic differential forms, if we use the usual differential forms, the basis of the free ${\mathcal O}_X$-module $\Omega_X^j$
$$\left\{\bigwedge_{\{x_{\alpha} = 0\} \subset (D \cup M)_P}dx_{\alpha} \bigwedge_{\{x_{\beta} = 0\} \not\subset (D \cup M)_P}dx_{\beta}\right\}$$
has varying ramification factors, and gives rise to the following corresponding basis of the free ${\mathcal O}_Y$-module $\Omega_Y^j$
$$\left\{\prod_{\{x_{\alpha} = 0\} \subset (D \cup M)_P}\dfrac{1}{\pi^*x_{\alpha}^{(m-1)/m}} \cdot \pi^*\left[\bigwedge_{\{x_{\alpha} = 0\} \subset (D \cup M)_P}dx_{\alpha} \bigwedge_{\{x_{\beta} = 0\} \not\subset (D \cup M)_P}dx_{\beta}\right]\right\}.$$
The varying ramifications cannot be expressd by the twist of a single (${\mathbb Q}\text{-}$) divisor.  This is why one is led to the use of logarithmic differential forms.\\

On the other hand, if we use the logarithmic differential forms, since there is no ramification, there is no ``push'' from the ramification to raise $A$ to $\lceil A\rceil$.  This is where the subtraction of the divisor $G$ comes in.  The difference between $- G_Y = - \pi^*G + \dfrac{m-1}{m}\pi^*G$ and $- \pi^*G$, which is $\dfrac{m-1}{m}\pi^*G$, gives the push \text{to raise $A$ to $\lceil A\rceil$.}

\subsubsection{Comparison with the classical argument}

In the classical argument for the proof of the Kawamata-Viehweg Vanishing, where we only have to deal with the top form, the free ${\mathcal O}_X$-module $\Omega_X^{n = \dim X}$ is of rank one, having one generator
$$\bigwedge_{\alpha = 1}^l dx_{\alpha} \bigwedge_{\beta = l + 1}^{n = \dim X}dx_{\beta}.$$
Therefore, it has a unique ramification factor giving rise to the following unique basis of the free ${\mathcal O}_Y$-module $\Omega_Y^n$:
$$\prod_{\alpha = 1}^l\dfrac{1}{\pi^*x_{\alpha}^{(m-1)/m}} \cdot \pi^*\left[\bigwedge_{\alpha = 1}^l dx_{\alpha} \bigwedge_{\beta = l + 1}^{n = \dim X}dx_{\beta}\right].$$
Moreover, the reciprocal $\prod_{\alpha = 1}^lx_{\alpha}^{(m-1)/m}$ of the ramification factor gives the ``push'' to raise $A$ to $\lceil A\rceil$.  However, the classical argument to look at the usual differential forms would face trouble in the case of lower degree forms as the basis has varying ramification factors (as discussed in \ref{uselog}).\\

Our new argument using the logarithmic forms and subtraction of the divisor $G$ applies to the lower differential forms in the setting dealing with the Kawamata-Viehweg type formulation of the (log) Akizuki-Nakano Vanishing as well as to the top differential form in the setting dealing with the classical Kawamata-Viehweg Vanishing. This also gives a slightly different view point towards the classical argument for the Kawamata-Viehweg Vanishing Theorem.

\subsubsection{An Alternative line of argument}\label{helmke} As suggested by Prof. Helmke, one could follow the following line of argument to prove our main vanishing result:

\begin{enumerate}
\item[(1)] Prove the case with $G = D$ of our formulation, i.e., $$\rH^i(X,\Omega_X^j(\log(D))(\lceil A\rceil - D)),$$ reducing its verification to the Steenbrink Vanishing Theorem via the Kawamata Covering Lemma as in our argument above.
\item[(2)]In order to prove the general case $F \leq G \leq D$, we set $A' = A + \epsilon(D - G)$ for a sufficiently small positive number $0 < \epsilon << 1$ so that $A'$ is again ample with $F' = \lceil A'\rceil - A' \leq D$.  Now, using (1), we conclude
$$0 = \rH^i(X,\Omega_X^j(\mathrm{log}(D)(\lceil A'\rceil - D)) = \rH^i(X,\Omega_X^j(\mathrm{log}(D))(\lceil A\rceil - G))$$
as required.
\end{enumerate}
This line of argument avoids the use of the residue sequence.  It also makes it clearer that what is essential is

$\bullet$ the Steenbrink Vanishing, and

$\bullet$ its fractional version as in \cite{Arapura2}.

\section{Proof of Theorem \ref{mainvan} by reduction mod $p$ via the result of Deligne-Illusie \cite{DI}}\label{section:pvanishing}

Here we explain how to obtain the fractional version of Steenbrink Vanishing, mentioned above, by reduction mod $p$.  We prove the following theorem, via the results of Deligne-Illusie and Raynaud \cite{DI} and a lemma by Hara \cite{Hara}, in characteristic $p$.  The following is special case of  \cite[theorem 8.2]{Arapura2}. \\

\begin{theorem}\label{pvanishing} Let $X$ be a nonsingular projective variety  over an algebraically closed field $k$ of characteristic $p>\dim X$. Let
 $D = \sum D_i$ be a simple normal crossings divisor such that the pair $(X,D)$ is liftable modulo $p^2$. If $L$ is a line bundle such that  
 $L(-\Delta)$ is ample
for some  $\bbQ$-divisor  $\Delta$ supported on $D$ with coefficients in $[0,1)$,
then 
\begin{equation*}
\rH^i(X,\Omega_X^j(\log D)(-D) \otimes L)=0  
\end{equation*}
for $i+j>\dim X$.
\end{theorem}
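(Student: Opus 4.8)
The plan is to follow the strategy of Deligne--Illusie \cite{DI}, adapted to the logarithmic setting with a twist by $L(-D)$, using Hara's lemma \cite{Hara} to absorb the fractional divisor $\Delta$. First I would recall the setup: since $(X,D)$ lifts to $W_2(k)$, by the Deligne--Illusie theorem there is a splitting in the derived category of the truncated logarithmic de Rham complex, i.e. $\tau_{<p}F_*\Omega_X^\bullet(\log D) \cong \bigoplus_{j<p} \Omega_X^j(\log D)[-j]$ after Frobenius pushforward (using $F: X \to X'$, the relative Frobenius, and the Cartier isomorphism). Since $p > \dim X$, the truncation is harmless and the full logarithmic de Rham complex splits. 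The point is to run this with coefficients in a suitable line bundle so that the $E_1$-degeneration of the (twisted, conjugate) Hodge-to-de Rham spectral sequence yields the vanishing.

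The key steps, in order: (1) Reduce the ample $\bbQ$-divisor condition to something usable mod $p$ via Hara's lemma --- this is what replaces the Kawamata covering lemma of the first proof. Hara's lemma says (roughly) that if $L(-\Delta)$ is ample with $\Delta$ supported on $D$ with coefficients in $[0,1)$, then for $p$ large (or after base change), $L^p(-p\Delta)$ has a section with appropriate divisor, allowing one to realize $L$ as a summand of $F_*$ of a line bundle that is ``almost'' a power of $L$ twisted down by $-D$; concretely one writes $p\Delta$'s round-down to split off the fractional contribution and lands in $\Omega_X^j(\log D)(-D)\otimes L$ as a direct summand of $F_*(\Omega_X^j(\log D)(-D)\otimes L^p \otimes (\text{correction}))$. (2) Combine the Deligne--Illusie splitting with this: obtain that $\rH^i(X, \Omega_X^j(\log D)(-D)\otimes L)$ is a summand of the hypercohomology $\bH^{i+j}(X', F_*(\Omega_X^\bullet(\log D)(-D)\otimes L^p\otimes N))$ for an appropriate ample-enough line bundle, i.e. of $\bH^{i+j}(X, \Omega_X^\bullet(\log D)(-D)\otimes \mathcal{L})$ with $\mathcal{L}$ very ample. (3) Apply the logarithmic analogue of Serre vanishing / the Akizuki--Nakano--Esnault--Viehweg--Steenbrink vanishing in this auxiliary high-power situation, or bound the hypercohomology directly: because the twisting line bundle is sufficiently ample, each term $\rH^q(X,\Omega_X^p(\log D)(-D)\otimes\mathcal{L})$ in the relevant range vanishes, and the spectral sequence then forces the desired $\rH^i(X,\Omega_X^j(\log D)(-D)\otimes L)=0$ for $i+j>\dim X$. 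The role of the $(-D)$ twist is exactly the Steenbrink normalization: it makes the relevant complex $\Omega_X^\bullet(\log D)(-D)$, whose hypercohomology is computed with the ``minus boundary'' convention, and it is compatible with the Cartier isomorphism.

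I expect the main obstacle to be step (1): correctly massaging the ampleness of the $\bbQ$-divisor $L(-\Delta)$ into an integral, mod-$p$ statement via Hara's lemma while keeping track of the bookkeeping of which line bundle appears as a Frobenius direct summand and with which boundary twist. One must be careful that the fractional part $\Delta$, after multiplication by $p$ and rounding, contributes a divisor that is $\leq D$ (i.e. stays within the ``$\log D$ with $-D$ twist'' framework) and does not introduce poles worse than simple --- this is precisely where the hypothesis ``coefficients in $[0,1)$'' is used, and where the $p > \dim X$ hypothesis guarantees enough room. The rest --- invoking Deligne--Illusie, the degeneration, and Serre-type vanishing for the very ample auxiliary twist --- is by now standard, and the conclusion then follows by taking the $i+j>\dim X$ part of the spectral sequence. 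Finally, one notes that Theorem~\ref{mainvan} with $G=D$ (Case $G=D$ in the excerpt) is the characteristic-zero incarnation, obtained by the usual spreading-out argument, and the general $F\leq G\leq D$ case follows via the round-up trick in \ref{helmke}.
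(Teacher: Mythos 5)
Your overall strategy is the paper's: the Deligne--Illusie decomposition of $F_*\Omega_X^\bullet(\log D)$ (valid since $(X,D)$ lifts mod $p^2$ and $p>\dim X$), the projection formula to trade a twist by $M$ for a twist by $M^p$, Hara's lemma to absorb the fractional divisor, and Serre vanishing at the end, with the degenerate $i=0$, $j>\dim X$ case killed because $\Omega_X^j=0$ there. Two points in your sketch need repair, and both sit exactly where you predicted the difficulty would be.

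First, you have misremembered Hara's lemma. It is not a statement about sections of $L^p(-p\Delta)$ or about realizing $L$ as a Frobenius direct summand; it is the quasi-isomorphism $\Omega_X^\bullet(\log D)\cong\Omega_X^\bullet(\log D)(D')$ for any \emph{integral} divisor $0\le D'\le (p-1)D$. Its role is to let you insert the twist $(D')$ (equivalently $(-D-D')$ after Serre duality) into the hypercohomology of the split complex without changing it. The functional role you assign it is right, but the mechanism you describe would not produce the inequality $\sum_{i+j=r}h^i(\Omega_X^j(\log D)(-D)\otimes M)\le \sum_{i+j=r}h^i(\Omega_X^j(\log D)(-D-D')\otimes M^p)$ that the argument actually runs on.

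Second, and more seriously, a single application of Frobenius does not suffice. The characteristic $p$ is fixed (only $p>\dim X$ is assumed), so $L^p(-D'_1)$, while it can be arranged to be ample, need not be positive enough for Serre vanishing of the specific sheaves $\Omega_X^j(\log D)(-D)$; ``very ample'' is not enough either. One must iterate the inequality $n$ times to reach $M^{p^n}(-D')$ with $n\gg 0$, writing $D'=p^{n-1}D'_1+p^{n-2}D'_2+\cdots$ with each $0\le D'_i\le(p-1)D$ so that Hara's lemma applies at every stage. This in turn forces the preliminary step you omitted: the coefficients $r_i$ of $\Delta$ must first be perturbed, using Kleiman's criterion to preserve ampleness of $L(-\Delta)$, into $[0,1)\cap\bbZ[\frac{1}{p^l}]$, so that $D'=p^n\Delta'$ is integral and satisfies $0\le D'\le(p^n-1)D$. ``Multiplying by $p$ and rounding,'' as you propose, does not interact well with the iteration. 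With these two corrections your outline becomes the paper's proof.
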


Now by a standard ``spreading out" argument, we obtain the following result in characteristic $0$, which implies the main theorem \ref{mainvan} (as explained in \ref{helmke}).  We note that the line bundle $L$ and the $\mathbb{Q}$-divisor $\Delta$ correspond to $\lceil A\rceil$ and $F$ in the notation of \S 2.

\begin{corollary}
 Let $X,D$, and $L$ be as above, but defined over an algebraically closed field of characteristic $0$. Then
 \begin{equation*}
\rH^i(X,\Omega_X^j(\log D)(-D) \otimes L)=0  
\end{equation*}
for $i+j>\dim X$.
\end{corollary}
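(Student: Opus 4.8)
The plan is to deduce the characteristic $0$ statement from Theorem~\ref{pvanishing} by the usual spreading-out argument; this is entirely formal given the positive-characteristic result, so the ``main obstacle'' here is really just bookkeeping of the models and the correct formulation of the ampleness hypothesis over the base.

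First I would choose a model. Since $X$, $D$, and $L$ are finitely presented data over an algebraically closed field $K$ of characteristic $0$, there is a finitely generated $\bbZ$-subalgebra $R \subset K$ and objects $\cX \to \Spec R$, $\cD \subset \cX$, $\cL \in \Pic(\cX)$, together with a $\bbQ$-divisor $\cDelta$ supported on $\cD$, spreading out $X, D, L, \Delta$ and such that (after shrinking $\Spec R$) $\cX \to \Spec R$ is smooth projective, $\cD$ is a relative simple normal crossings divisor, $\cDelta$ has coefficients in $[0,1)$, and $\cL(-\cDelta)$ is ample over $\Spec R$. Since ampleness and the SNC condition are open on the base, and formation of $\rH^i(X, \Omega^j_X(\log D)(-D)\otimes L)$ commutes with the base change $R \to K$ by flat base change (again after shrinking $\Spec R$ so that the relative Hodge cohomology sheaves are locally free of the expected rank), it suffices to prove the vanishing after base change to all closed points of $\Spec R$ in a dense open subset.

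Next I would invoke liftability. For a closed point $s \in \Spec R$ with residue field $k(s)$ of characteristic $p$, the fiber $(\cX_s, \cD_s)$ is automatically liftable modulo $p^2$: it lifts to $(\cX, \cD)$ over the ring $R/\frp^2$ where $\frp$ is a prime of $R$ over $p$ lying above $s$ (or, more carefully, over $R \otimes_{\bbZ} \bbZ/p^2$ localized appropriately), which is a flat $\bbZ/p^2$-algebra. Moreover the set of closed points whose residue characteristic exceeds $\dim X = \dim \cX_s$ is dense in $\Spec R$, since only finitely many primes are excluded. Finally, over $k(s)$ the hypotheses of Theorem~\ref{pvanishing} are met---$\cL_s(-\cDelta_s)$ is ample, $\cDelta_s$ is supported on $\cD_s$ with coefficients in $[0,1)$, and $(\cX_s,\cD_s)$ is liftable mod $p^2$---so that theorem gives $\rH^i(\cX_{\bar s}, \Omega^j(\log \cD_{\bar s})(-\cD_{\bar s})\otimes \cL_{\bar s}) = 0$ for $i+j > \dim X$, where $\bar s$ denotes a geometric point over $s$. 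Running this over a dense set of closed points and using the generic freeness/base-change reduction from the previous paragraph yields the vanishing over $K$, completing the proof.

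The only genuinely delicate point is making the base-change step precise: one must shrink $\Spec R$ so that the cohomology sheaves $R^i\pi_*(\Omega^j_{\cX/R}(\log\cD)(-\cD)\otimes\cL)$ are locally free and commute with all base changes, which is standard (generic freeness plus cohomology and base change), and one must ensure the spreading-out of the ampleness of $L(-\Delta)$ is an \emph{open} condition on the base, which it is. Everything else is routine.
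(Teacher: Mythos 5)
Your proposal is correct and is exactly the ``standard spreading out argument'' that the paper invokes (without spelling out) to deduce the corollary from Theorem~\ref{pvanishing}: choose a model over a finitely generated $\bbZ$-algebra, shrink the base so that ampleness, the relative SNC condition, and cohomology-and-base-change all hold, note that liftability mod $p^2$ is automatic from the characteristic-zero model, and specialize to closed points of residue characteristic $p > \dim X$. No gaps; the details you flag (openness of ampleness, generic freeness) are precisely the right ones to check.
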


We give a short self contained proof (via the results and lemma mentioned above) of \ref{pvanishing}, extracting the ideas from the proof of  \cite[theorem 8.2]{Arapura2}.  First let us quote the following lemma by Hara \cite{Hara}.

\begin{lemma}[Hara {\cite[3.3]{Hara}}]
 If  $D'$ is an integral divisor satisfying $0\leq D'\leq (p-1)D$, then there is a quasi-isomorphism
 $$\Omega_X^{\bullet}(\log D)\cong \Omega_X^\bullet(\log D)(D')$$

\end{lemma}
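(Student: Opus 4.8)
The plan is to exhibit the quasi-isomorphism as the natural inclusion $\Omega_X^\bullet(\log D)\hookrightarrow \Omega_X^\bullet(\log D)(D')$ induced by $\cO_X\hookrightarrow\cO_X(D')$, and to check it is a quasi-isomorphism locally on $X$. As a preliminary I would record the (characteristic-free) fact that $\Omega_X^\bullet(\log D)(D')$ really is a complex under the de Rham differential and that the inclusion is a map of complexes: if $s=\prod_i x_i^{d_i}$ is a local equation for $D'$ (with $0\le d_i\le p-1$ since $0\le D'\le(p-1)D$), then $d\log s=\sum_i d_i\tfrac{dx_i}{x_i}$ lies in $\Omega^1_X(\log D)$, so for $\eta\in\Omega^j_X(\log D)$ one has $d\!\left(\tfrac{1}{s}\eta\right)=\tfrac{1}{s}\,d\eta-\tfrac{1}{s}\,(d\log s)\wedge\eta\in\Omega^{j+1}_X(\log D)(D')$. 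Only the fact that $D'$ is supported on $D$ is used here; the bound $D'\le(p-1)D$ and the characteristic enter only at the final step.

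Since a map of complexes is a quasi-isomorphism iff it is so on cohomology sheaves, and this may be checked \'etale-locally, I would then choose \'etale coordinates: Zariski-locally there is an \'etale morphism $U\to\mathbb{A}^n_k$ under which $D|_U$ is the preimage of $\{x_1\cdots x_r=0\}$, and since \'etale morphisms are flat and compatible with $\Omega^1$ and with $d$, the cohomology sheaves of the (twisted) log de Rham complex pull back. This reduces the lemma to the model $X=\mathbb{A}^n_k$, $D=\{x_1\cdots x_r=0\}$, $D'=\sum_{i=1}^r d_iD_i$ with $0\le d_i\le p-1$. On this model one has the Künneth decomposition
\[
\Omega_X^\bullet(\log D)=C_1\otimes_k\cdots\otimes_k C_n,\qquad \Omega_X^\bullet(\log D)(D')=C_1'\otimes_k\cdots\otimes_k C_n',
\]
where $C_i=\bigl[k[x_i]\xrightarrow{d}k[x_i]\tfrac{dx_i}{x_i}\bigr]$ and $C_i'=x_i^{-d_i}C_i$ for $i\le r$, while $C_i=C_i'=\bigl[k[x_i]\xrightarrow{d}k[x_i]\,dx_i\bigr]$ for $i>r$; the inclusion is the tensor product of the inclusions $C_i\hookrightarrow C_i'$. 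As these are bounded complexes of $k$-vector spaces, the Künneth formula reduces the problem to showing that each $C_i\hookrightarrow C_i'$ is a quasi-isomorphism, which for $i>r$ is an equality.

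It remains to check, for $0\le d\le p-1$, that $C=\bigl[k[x]\xrightarrow{d}k[x]\tfrac{dx}{x}\bigr]\hookrightarrow C'=\bigl[x^{-d}k[x]\xrightarrow{d}x^{-d}k[x]\tfrac{dx}{x}\bigr]$ is a quasi-isomorphism. The quotient complex $C'/C$ has degree-$0$ term spanned by $\overline{x^{-1}},\dots,\overline{x^{-d}}$, degree-$1$ term spanned by $\overline{x^{-1}}\tfrac{dx}{x},\dots,\overline{x^{-d}}\tfrac{dx}{x}$, and differential $\overline{x^m}\mapsto m\,\overline{x^m}\tfrac{dx}{x}$ for $-d\le m\le-1$; since $0<-m<p$, each such $m$ is invertible in $k$, so this differential is an isomorphism and $C'/C$ is acyclic. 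Hence $C\hookrightarrow C'$, and therefore the original inclusion, is a quasi-isomorphism. This is precisely where $d\le p-1$ is needed: for $d=p$ the class $\overline{x^{-p}}$ would be a new cocycle. I expect the only point requiring genuine care to be the \'etale-local reduction together with the compatibility of the Künneth decomposition with the inclusion -- both routine, the latter being immediate from the description of $s$ and $d\log s$ above.
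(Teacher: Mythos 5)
The paper does not actually prove this lemma---it is quoted verbatim from Hara \cite[3.3]{Hara}---so there is no in-text argument to compare yours against. Your proof is correct and is essentially the standard Cartier-type computation that Hara's own proof rests on: show the quotient complex $\Omega_X^\bullet(\log D)(D')/\Omega_X^\bullet(\log D)$ is acyclic by reducing, \'etale-locally and via K\"unneth, to the rank-one computation $x^m\mapsto m\,x^m\,\tfrac{dx}{x}$ with $-d\le m\le -1$ and $d\le p-1$, where $m$ is invertible. The preliminary observation that $\Omega_X^\bullet(\log D)(D')$ is stable under $d$ (because $d\log s\in\Omega_X^1(\log D)$ for $s$ a local equation of $D'$) is exactly the right justification that the statement even makes sense.

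The one step you flag as ``routine'' that deserves an explicit word is the localization step, because the differential is \emph{not} $\cO_X$-linear, so the cohomology sheaves of $\Omega_X^\bullet(\log D)(D')$ are not obviously quasi-coherent, and neither ``cohomology sheaves pull back along flat \'etale maps'' nor ``compute on global sections over $\bbA^n_k$'' is automatic. The standard fix, implicit in Deligne--Illusie and Hara, is to work with $F_*$ of both complexes, where $F$ is the absolute Frobenius: since $d(f^pg)=f^p\,dg$, the differentials of $F_*\Omega_X^\bullet(\log D)(D')$ are $\cO_X$-linear, the cohomology sheaves are coherent, and they commute with flat base change and with taking global sections on affines. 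With that remark inserted, your \'etale-local reduction to the model case, the K\"unneth decomposition (over a field, so no Tor terms), and the one-variable acyclicity computation go through exactly as written, and the bound $D'\le(p-1)D$ is used precisely where you say it is.
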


Now using the results of \cite{DI} and the above lemma, we obtain the following.

\begin{lemma}\label{lemma:boot1}
Let $M$ be a line bundle on $X$. Suppose that  $D'$ is an integral divisor and that $0\le D'\le (p-1)D$.
Then the following inequalities hold.
\begin{enumerate}
\item[(a)] For all $r$,
$$\sum_{i+j=r}h^i(X, \Omega_X^j(\log D) \otimes M)\le \sum_{i+j=r}h^i(X,
\Omega_X^j(\log D)(D')\otimes M^p)$$

\item[(b)] For all $r$,
$$\sum_{i+j=r}h^i(X, \Omega_X^j(\log D)(-D) \otimes M)\le \sum_{i+j=r}h^i(X,
\Omega_X^j(\log D)(-D-D')\otimes M^p)$$

\end{enumerate}

\end{lemma}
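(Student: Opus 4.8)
The plan is to run the classical Deligne--Illusie vanishing machinery in its logarithmic form. The three ingredients are: (1) the logarithmic Deligne--Illusie decomposition for $(X,D)$, which applies because $(X,D)$ lifts modulo $p^2$ and $\dim X<p$; (2) the degeneration direction of the Hodge--to--de Rham spectral sequence, of which we only need the trivial inequality ``$E_1$ dominates $E_\infty$''; and (3) Hara's Lemma quoted above. Part (a) is proved by assembling these, and part (b) is then obtained from (a) by Serre duality (so that the only genuine argument is for (a)).

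\textbf{Proof of (a).} Let $n=\dim X$, let $F\colon X\to\wt X$ be the relative Frobenius over $k$, $W\colon\wt X\to X$ the projection (an isomorphism of schemes since $k$ is perfect), and $\wt D=W^{-1}(D)$. Set $\cL:=W^*M$, a line bundle on $\wt X$; since $W\circ F$ is the absolute Frobenius we have $F^*\cL\cong M^{\otimes p}$. Give $F^*\cL$ its canonical integrable connection $\nabla$, so that $\Omega_X^\bullet(\log D)\otimes F^*\cL$ and $\Omega_X^\bullet(\log D)(D')\otimes F^*\cL$ are honest complexes of abelian sheaves; note that $(F^*\cL,\nabla)$ is Zariski-locally isomorphic to the unit object $(\cO_X,d)$, so tensoring by it preserves quasi-isomorphisms. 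By the projection formula $F_*\bigl(\Omega_X^\bullet(\log D)\otimes F^*\cL\bigr)\cong F_*\Omega_X^\bullet(\log D)\otimes\cL$, and the logarithmic Deligne--Illusie decomposition \cite{DI} gives an isomorphism in the derived category of $\wt X$
$$F_*\Omega_X^\bullet(\log D)\ \cong\ \bigoplus_{j\ge 0}\Omega_{\wt X}^j(\log\wt D)[-j].$$
Tensoring with $\cL$, taking hypercohomology, and transporting along $W$ yields
$$\bbH^r\bigl(X,\Omega_X^\bullet(\log D)\otimes F^*\cL\bigr)\ \cong\ \bigoplus_{i+j=r}\rH^i\bigl(X,\Omega_X^j(\log D)\otimes M\bigr).$$
On the other hand, Hara's Lemma (tensored by $(F^*\cL,\nabla)$) gives $\bbH^r\bigl(X,\Omega_X^\bullet(\log D)\otimes F^*\cL\bigr)\cong\bbH^r\bigl(X,\Omega_X^\bullet(\log D)(D')\otimes F^*\cL\bigr)$, and the na\"ive (\emph{b\^ete}) filtration on the latter complex produces a spectral sequence with $E_1^{i,j}=\rH^i\bigl(X,\Omega_X^j(\log D)(D')\otimes F^*\cL\bigr)$ abutting to its hypercohomology. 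Hence
$$\sum_{i+j=r}h^i\bigl(X,\Omega_X^j(\log D)\otimes M\bigr)=\dim_k\bbH^r\le\sum_{i+j=r}h^i\bigl(X,\Omega_X^j(\log D)(D')\otimes M^p\bigr),$$
using $F^*\cL\cong M^{\otimes p}$ in the last term. This is (a).

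\textbf{Deduction of (b).} Using $\bigwedge^n\Omega_X^1(\log D)\cong\omega_X(D)$ one gets $\bigl(\Omega_X^j(\log D)\bigr)^\vee\cong\Omega_X^{n-j}(\log D)\otimes\omega_X^{-1}(-D)$, and Serre duality then gives $h^i\bigl(X,\Omega_X^j(\log D)(-D)\otimes M\bigr)=h^{n-i}\bigl(X,\Omega_X^{n-j}(\log D)\otimes M^{-1}\bigr)$ and $h^i\bigl(X,\Omega_X^j(\log D)(-D-D')\otimes M^p\bigr)=h^{n-i}\bigl(X,\Omega_X^{n-j}(\log D)(D')\otimes M^{-p}\bigr)$. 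Summing over $i+j=r$ and re-indexing by $i'=n-i$, $j'=n-j$ turns the desired inequality (b) for $(M,r)$ into inequality (a) for the line bundle $M^{-1}$ at level $2n-r$, which has just been established. (Alternatively, one can run the argument of (a) verbatim using the $(-D)$-twisted forms of both the logarithmic Deligne--Illusie decomposition and Hara's Lemma.)

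\textbf{Expected main obstacle.} Nothing here is deep; the point requiring care is the bookkeeping around the Frobenius twist and the canonical connection — verifying that $F^*\cL$, the effective divisor $D'\le(p-1)D$, and in (b) the relevant duals are all $\nabla$-compatible so that every object in sight is a genuine complex, that Hara's quasi-isomorphism genuinely survives the tensor with $(F^*\cL,\nabla)$, and that the identifications $F^*W^*M\cong M^{\otimes p}$ and $h^i$-on-$\wt X$ $=$ $h^i$-on-$X$ are applied consistently. This is where an oversight would most plausibly occur.
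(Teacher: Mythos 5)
Your argument is correct and is essentially the paper's own proof: both rest on the Deligne--Illusie logarithmic decomposition of $F_*\Omega_X^\bullet(\log D)$, the projection formula, Hara's quasi-isomorphism to insert the twist by $D'$, the na\"ive-filtration spectral sequence for the inequality, and Serre duality for part (b). Your version is if anything slightly more careful than the paper's (which uses the absolute Frobenius and leaves the canonical connection on $M^p$ implicit), but the route is the same.
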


\begin{proof}
Let $F:X\to X$ denote the absolute Frobenius map.
  By \cite[\S 4.2]{DI}, the projection formula, and the previous lemma, we have
  \begin{equation*}
    \begin{split}
 \rH^i(X, \bigoplus_j \Omega_X^j(\log D)[-j] \otimes M) &\cong \bbH^i(X,
 (F_*\Omega_X^\bullet(\log D) )\otimes M)     \\
&\cong \bbH^i(X,
 F_*(\Omega_X^\bullet(\log D) \otimes M^p))  \\
&\cong \bbH^i(X,\Omega_X^\bullet(\log D) \otimes M^p)\\
&\cong \bbH^i(X,\Omega_X^\bullet(\log D)(D') \otimes M^p)
  \end{split}
  \end{equation*}
  These isomorphisms  together with  the spectral sequence
  $$E_1^{ab}= \rH^b(X,\Omega_X^a(\log D)(D')\otimes M^p)\Rightarrow
\bbH^{a+b}(X,\Omega_X^\bullet(\log D)(D') \otimes M^p)$$
prove the first inequality.  We obtain the second inequality from the first using the Serre duality.

\end{proof}

\begin{lemma}\label{lemma:boot2} 
 With the same notation as in  the previous lemma for $X, D$ and $M$, suppose this time that  $D'$ is an integral divisor and  that $0\le D'\le (p^n-1)D$.
 Then 
  $$\sum_{i+j=r}h^i(X, \Omega_X^j(\log D)(-D) \otimes M)\le \sum_{i+j=r}h^i(X,
\Omega_X^j(\log D)(-D-D')\otimes M^{p^n})$$
\end{lemma}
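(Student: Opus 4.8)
The plan is to prove the inequality by induction on $n$, using Lemma \ref{lemma:boot1}(b) both as the base case and as the driving step. For $n=0$ there is nothing to prove (then $D'=0$ and the two sides agree), and for $n=1$ the statement is exactly Lemma \ref{lemma:boot1}(b); in fact the inductive step below, specialized to $n=1$, collapses to that lemma, so I would simply run the induction for all $n\ge 1$ starting from the trivial base case $n=0$.

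For the inductive step, fix $n\ge 1$ and an integral divisor $D'$ with $0\le D'\le(p^n-1)D$. Write the coefficient of $D'$ along a component $D_\nu$ of $D$ as $d_\nu$, with integers $0\le d_\nu\le p^n-1$. The device I would use is the base-$p$ expansion of the $d_\nu$: set $D'_0:=\sum_\nu(d_\nu\bmod p)\,D_\nu$ and $D'_1:=\sum_\nu\lfloor d_\nu/p\rfloor\,D_\nu$, so that $D'=D'_0+pD'_1$, with $0\le D'_0\le(p-1)D$ and, using $\lfloor(p^n-1)/p\rfloor=p^{n-1}-1$, also $0\le D'_1\le(p^{n-1}-1)D$; both are effective and supported on $D$. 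Applying the inductive hypothesis to the line bundle $M$ and the divisor $D'_1$ gives
$$\sum_{i+j=r}h^i(X,\Omega_X^j(\log D)(-D)\otimes M)\le\sum_{i+j=r}h^i(X,\Omega_X^j(\log D)(-D-D'_1)\otimes M^{p^{n-1}}).$$
Then I would rewrite $\Omega_X^j(\log D)(-D-D'_1)\otimes M^{p^{n-1}}\cong\Omega_X^j(\log D)(-D)\otimes N$ with the auxiliary line bundle $N:=\cO_X(-D'_1)\otimes M^{p^{n-1}}$, and apply Lemma \ref{lemma:boot1}(b) to $N$ and the divisor $D'_0$ (legitimate since $0\le D'_0\le(p-1)D$). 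As $N^p\cong\cO_X(-pD'_1)\otimes M^{p^n}$, this produces
$$\sum_{i+j=r}h^i(X,\Omega_X^j(\log D)(-D-D'_1)\otimes M^{p^{n-1}})\le\sum_{i+j=r}h^i(X,\Omega_X^j(\log D)(-D-D'_0-pD'_1)\otimes M^{p^n}).$$
Since $D'_0+pD'_1=D'$, chaining the two displayed inequalities yields the asserted bound at level $n$, completing the induction.

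I do not anticipate a genuine obstacle here; the argument is essentially bookkeeping built on top of Lemma \ref{lemma:boot1}(b). The one point requiring care is that Lemma \ref{lemma:boot1}(b) permits only a twist by the fixed sheaf $\cO_X(-D)$ together with an \emph{arbitrary} line bundle, so the extra twist $\cO_X(-D'_1)$ produced by the inductive hypothesis must be absorbed into that arbitrary line bundle — this is exactly the role of $N$ above, and it is cleanest to peel off the lowest $p$-adic digit $D'_0$ of $D'$ rather than the highest. One should also confirm that the digit bounds $0\le D'_0\le(p-1)D$ and $0\le D'_1\le(p^{n-1}-1)D$ match the hypotheses needed at each level, which is precisely the content of $\lfloor(p^n-1)/p\rfloor=p^{n-1}-1$.
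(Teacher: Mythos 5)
Your proof is correct and is essentially the paper's argument: both decompose $D'$ in base $p$ with digits bounded by $(p-1)D$ and iterate Lemma \ref{lemma:boot1}(b), absorbing the accumulated twist $\cO_X(-D_1')$ into the auxiliary line bundle at each stage. The paper phrases this as "repeatedly applying" the lemma starting from the leading digit, while you package the same chain of inequalities as an induction on $n$; unrolled, the two are identical.
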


\begin{proof}
  We may write $D' = p^{n-1} D_1' + p^{n-2} D_2'+\ldots$, where $0\le D_i'\le (p-1)D$. Then repeatedly 
applying lemma \ref{lemma:boot1} gives
\begin{equation*}
    \begin{split}
    \sum_{i+j=r}h^i(X, \Omega_X^j(\log D)(-D) \otimes M) &\le
    \sum_{i+j=r}h^i(X, \Omega_X^j(\log D)(-D) \otimes M^p(-D_1'))\\
     &\le
    \sum_{i+j=r}h^i(X, \Omega_X^j(\log D)(-D) \otimes M^{p^2}(-pD_1'-D_2'))\\
&\ldots
 \end{split}
  \end{equation*}
 \end{proof}

\begin{proof}[Proof of \ref{pvanishing}]
 By assumption,  $L(-\Delta)$ is ample for some $\Delta=\sum r_iD_i$ with $r_i\in [0,1)\cap \bbQ$.
 Using Kleiman's ampleness criterion (cf \cite{Lazarsfeld}),  we can see that $L(-\sum r_i'D_i)$  remains ample,  whenever
 $r'_i$ is sufficiently close to $r_i$. Therefore, we can assume that the coefficients $r_i$ lie in $[0,1)\cap \bbZ[\frac{1}{p^l}]$ for some sufficiently large integer $l$.
Thus,
 $L^{p^n}(-D')$ is ample for some integer $n>0$ and some integral divisor $0 \le D' = p^n(\sum r_i'D_i) \le (p^n-1)D$.  We may also assume, taking $n$ sufficiently large, that
 $$\rH^i(X, \Omega_X^j(\log D)(-D)\otimes L^{p^n}(-D'))=0$$
 for  all $i>0$ by the Serre Vanishing.  Now \ref{pvanishing} is a consequence of lemma \ref{lemma:boot2}, noting that, if $i + j = r > \dim X$, then either $i > 0$ or $i = 0$ with $j > \dim X$ and hence $\rH^0(X, \Omega_X^j(\log D)(-D)\otimes L) = \rH^0(X, \Omega_X^j(\log D)(-D)\otimes L^{p^n}(-D'))=0$.
\end{proof}

\section{Stacky version}

\cite{MO} gives an interpretation of the Kawamata-Viehweg Vanishing as (an application of) the Kodaira Vanishing for a certain Deligne-Mumford stack.  In the same spirit, our main vanishing result can be interpreted as (an application of) the Steenbrink Vanishing for a certain Deligne-Mumford stack.

\section{Applications/Future directions}

The application of the Kawamata-Viehweg Vanishing Theorem in the Minimal Model Program is one of the most remarkable stories in the modern development of the subject of Algebraic Geometry.  Here we list some of the well-known applications of the Akizuki-Nakano Vanishing and the Steenbrink Vanishing in the hope that our KV-type formulation of the (log) Akizuki-Nakano Vanishing will find some interesting applications in the future.  

\subsection{Unobstructedness of the deformation of Fano manifolds}\ 

\begin{case} (Classical unobstructedness of the deformation of Fano manifolds)\\
Let $X$ be a Fano manifold, i.e., a nonsingular projective variety with $- K_X$ being ample.  Then we have
$$\rH^2(X, T_X) \cong \rH^2(X,\Omega^{\dim X - 1}(- K_X)) = 0,$$
and hence the deformation of the Fano manifold has no obstruction \cite{Mori-Mukai} by the Akizuki-Nakano Vanishing.
\end{case}
\begin{case}(Unobstructedness of the deformation of log ${\mathbb Q}$-Fano manifolds)\\ Let $(X,B+F)$ be a pair consisting of a nonsingular projective variety and an effective ${\mathbb Q}$-divisor $B + F = \sum B_k + \sum f_jF_j \hskip.1in (0 < f_j < 1)$ with the support $D = \sum B_k + \sum F_j$ being a simple normal crossings divisor on $X$.  Assume $(X,B+F)$ is a log ${\mathbb Q}$-Fano manifold, i.e., $- (K_X + B + F)$ is ample.  Then the deformation of the pair $(X,B+F)$ is unobstructed, since
$$\begin{array}{rcl}
\rH^2(X,T_X(- \log(D)) &\cong& \rH^2(X,\Omega^{\dim X - 1}(\log(D))(- (K_X + D)) \\
&=& \rH^2(X, \Omega^{\dim X - 1}(\log(D))(\lceil - (K_X + B + F)\rceil - G)) = 0,\\
\end{array}$$
where $G = \sum F_j$ by our Theorem 2.1.
\end{case}
\subsection{Extension of the Akizuki-Nakano Vanishing to singular varieties, and a theorem by Flenner}

Steenbrink's motivation to prove his vanishing theorem was to give a simple proof of the vanishing theorem of Guillen, Navarro, Pascual and  Puerta \cite{Nav}, which can be considered a natural extension (from a certain point of view) of the Kodaira-Akizuki-Nakano Vanishing to singular varieties involving the du Bois complex.  A very nice application of their vanshing theorem is due to Flenner \cite{Flen}, who proves that the regular $l$-forms on the smooth locus of a singular variety extend to the regular forms on any resolution of singularities for 
$l$ less than the codimension of the singular set minus $1$.  Flenner uses the Steenbrink Vanishing only indirectly; a different argument, where the vanishing is used more explicitly, can be found in \cite{Arapura1}.

\subsection{Kovacs' singular version of the Esnault-Viehweg Vanishing and its application to the study of the family of canonically polarized varieties} Kovacs \cite{Kov2} proves a singular version of the Esnault-Viehweg Vanishing (and others).  As an application of these vanishing theorems, he proves an Arakelov-Parshin type boundedness result for the families of canonically polarized varieties with rational Gorenstein singularities (cf.\cite{Kov1, Viehweg-Zuo}).  See also Kovacs' extension  \cite{Kov3} of the Steenbrink Vanishing Theorem.

\subsection{Analysis of the zeo locus of the (log) 1-forms by Wei, extending the previous results of Hacon-Kovacs \cite{Hacon-Kovacs} and Popa-Schnell \cite{Popa-Schnell}} Wei \cite{Wei1, Wei2} proves that the zero-locus of any global holomorphic log-one-form on a projective log-smooth pair (X, D) of log-general type must be non-empty, using some generalizations of the Kodaira-Saito Vanishing theorem \cite{Saito}.  This is a generalization of the results proved by \cite{Hacon-Kovacs} and \cite{Popa-Schnell}.

\subsection{Generalization in terms of the ``multiplier ideal sheaf''} The Kawamata-Viehweg Vanishing Theorem has a generalization in terms of the ``multiplier ideal sheaf''.  It is an interesting question what the proper definition of a ``multiplier ideal sheaf'' and a generalization would be in the context of our vanishing result.  We will discuss the answer to this question in the subsequent papers.

\vskip.1in

\paragraph{\bf Acknowledgements.} We thank Professors O. Fujino, C. Hacon, S. Helmke, Y. Kawamata, Y. Namikawa, A. Moriwaki, S. Mori, S. Mukai, and M. Nori for invaluable comments, suggestions and support.  We also thank the members of our seminar, P. Coupek, H. Li, and H. Wang for listening to the talks about the subject.

\end{document}